\newenvironment{enumeratei}{\begin{enumerate}[\upshape (i)]}{\end{enumerate}}
\newenvironment{enumeratea}{\begin{enumerate}[\upshape (a)]}{\end{enumerate}}
\definecolor{MyDarkBlue}{rgb}{0,0.08,0.50}
\definecolor{BrickRed}{rgb}{0.65,0.08,0}
\newtheorem{Lemma}{Lemma}[section]
\newtheorem{Proposition}[Lemma]{Proposition}
\newtheorem{Theorem}[Lemma]{Theorem}
\newtheorem{Condition}[Lemma]{Condition}
\theoremstyle{definition}
\newtheorem{Remark}[Lemma]{Remark}
\newcommand{\prob}{\mathbb{P}}
\newcommand{\Pv}{\mathbb{P}}
\newcommand{\E}{\mathbb{E}}
\newcommand{\Ev}{\mathbb{E}}
\newcommand{\CC}{\mathcal{C}}
\newcommand{\set}[1]{\left\{#1\right\}}
\newcommand{\expec}{\mathbb{E}}
\newcommand{\bfd}{{\boldsymbol d}}
\newcommand {\convd}{\stackrel{d}{\longrightarrow}}
\newcommand{\eqn}[1]{\begin{equation} #1 \end{equation}}
\newcommand{\eqan}[1]{\begin{align} #1 \end{align}}
\newcommand{\sss}{\scriptscriptstyle}
\newcommand{\Var}{{\rm Var}}
\newcommand{\op}{o_{\sss \prob}}
\newcommand {\convp}{\stackrel{\sss {\mathbb P}}{\longrightarrow}}
\newcommand {\convas}{\stackrel{\sss a.s.}{\longrightarrow}}
\newcommand*{\CMD}{{\mathrm{CM}}_n(\boldsymbol{d})}
\newcommand{\e}{{\mathrm e}}
\newcommand{\FR}{F_{\sss R}}
\newcommand{\fR}{f_{\sss R}}
\newcommand{\Wn}{L_n}
\newcommand{\nn}{\nonumber}
\numberwithin{equation}{section}
\newcommand{\Pn}{P_n}
\newcommand{\alphan}{\lambda_n}
\newcommand{\tn}{\bar{t}_n}
\newcommand{\Tcol}{T^{\sss{\rm (col)}}}
\newcommand{\barTcol}{\bar{T}^{\sss{\rm (col)}}}
\newcommand{\Scal}{{\mathcal S}}
\newcommand{\Mcal}{{\mathcal M}}
\newcommand{\R}{\mathbb{R}}
\newcommand{\N}{\mathbb{N}}
\newcommand{\indicator}[1]{\mathbbm{1}_{\set{#1}}}
\newcommand{\dd}{\mathrm{d}}
\newcommand{\cP}{\mathcal{P}}
\renewcommand{\emptyset}{\varnothing}
\newcommand{\refeq}[1]{\eqref{eq:#1}} % note: replaces Remco's version; the eq: is for type checking
\newcommand{\RvdH}[1]{\todo[color=Magenta,inline]{Remco: #1}}
\newcommand{\KJ}[1]{\todo[color=SkyBlue,inline]{Juli: #1}}
\def\ind{{\rm 1\hspace{-0.90ex}1}}
\newcommand{\cA}{\mathcal{A}}
\newcommand{\cF}{\mathcal{F}}
\newcommand{\cG}{\mathcal{G}}
\newcommand{\cL}{\mathcal{L}}
\newcommand{\cN}{\mathcal{N}}
\newcommand{\cV}{\mathcal{V}}
\newcommand{\bN}{\mathbb{N}}
\newcommand{\bR}{\mathbb{R}}
\newcommand{\BP}{{\sf BP}}
\newcommand{\CA}{\mathcal {A}}
\newcommand{\CB}{\mathcal {B}}
\newcommand{\CF}{\mathcal {F}}
\newcommand{\CI}{\mathcal {I}}
\newcommand{\CL}{\mathcal {L}}
\newcommand{\CP}{\mathcal {P}}
\newcommand{\CU}{\mathcal {U}}
\newcommand{\CH}{\mathcal {H}}
\newcommand*{\la}{\lambda}
\newcommand*{\La}{\Lambda}
\newcommand*{\Rb}{\mathbb R}
\newcommand*{\be}{\begin{equation}}
\newcommand*{\ee}{\end{equation}}
\newcommand*{\ba}{\begin{aligned}}
\newcommand*{\ea}{\end{aligned}}
\newcommand*{\barr}{\begin{array}{c}}
\newcommand*{\earr}{\end{array}}
\def \toindis  {\buildrel {d}\over{\longrightarrow}}
\def \toas     {\buildrel {a.s.}\over{\longrightarrow}}
\begin{document}

		\title[Epidemics and FPP]{The front of the epidemic spread\\and first passage percolation}

		\date{\today}
		\subjclass[2000]{Primary: 60C05, 05C80, 90B15.}
		\keywords{Flows, random graphs, random networks, epidemics on random graphs, first passage percolation,
		hopcount, interacting particle systems}

		\author[Bhamidi]{Shankar Bhamidi$^1$}
		\address{$^1$Department of Statistics, University of North Carolina, Chapel Hill.}
		\author[van der Hofstad]{Remco van der Hofstad$^3$}
		\address{$^3$Department of Mathematics and
		    Computer Science, Eindhoven University of Technology, P.O.\ Box 513,
		    5600 MB Eindhoven, The Netherlands.}

		\author[Komj\'athy]{J\'ulia Komj\'athy$^3$}
		\email{bhamidi@email.unc.edu, rhofstad@win.tue.nl,j.komjathy@tue.nl}

\begin{abstract}
In this paper we establish a connection between epidemic models on random networks with general infection times considered in \cite{BarRei13} and first passage percolation. Using techniques developed in \cite{BhaHofHoo12}, when each vertex has infinite contagious periods,  we extend results on the epidemic curve in \cite{BarRei13} from bounded degree graphs to general sparse random graphs with degrees having finite third moments as $n\to\infty$. We also study the epidemic trail between the source and typical vertices in the graph. This connection to first passage percolation can be also be used to study epidemic models with general contagious periods as in \cite{BarRei13} without bounded degree assumptions.   
\end{abstract}

	\maketitle

\section{Introduction and model}\label{s:int}
We consider the spread of an epidemic on the configuration model with i.i.d.\ infection times having a general continuous distribution, and an infinite contagious period for each vertex. We describe the link between first passage percolation (FPP) on sparse random graph models \cite{BhaHofHoo09b, BhaHofHoo12}, and general epidemics on the configuration model by Barbour and Reinert \cite{BarRei13}. The work in \cite{BhaHofHoo09b, BhaHofHoo12} is more general in terms of the graph models allowed, but more restrictive in terms of the epidemic process, requiring the assumption of infinite contagious periods and i.i.d.\ infection times, while the work in \cite{BarRei13} allows for more general epidemic processes, but assumes the graphs have bounded degrees. The main result, Theorem \ref{thm-epidemic-curve} below, extends \cite{BhaHofHoo09b, BhaHofHoo10, BhaHofHoo12} to the study of the epidemic curve in the spirit of \cite{BarRei13} by describing how the infection sweeps through the system. We also investigate the {\em epidemic trail}, namely the number of individuals that spread the infection from the source to the destination. Branching process approximations for the epidemic process and stable-age distribution theory for the corresponding branching processes developed by Jagers and Nerman \cite{Jage75,jagers1984growth,Ner81} play a critical role in the proof of the main result.

\subsection{Configuration model}
We first describe the model for the underlying network on which the epidemic process takes place. The configuration model $\CMD$ (see \cite{Boll01} or \cite[Chapters 7 and 10]{Hofs10c}) on $n$ vertices with degree sequence $\boldsymbol{d}_n=(d_1,\dotsc,d_n)$ is constructed as follows. Let $[n]:=\set{1,2,\ldots, n}$ denote the vertex set.  To each vertex $i\in [n]$, attach $d_i$ half-edges to that vertex with total degree $\CL_n=\sum_{i\in[n]} d_i$ assumed even (when the degrees $d_i$ are drawn independently from some common degree distribution $D$, $\CL_n$ may be odd; if so, select one of the $d_i$ uniformly at random and increase it by 1).

We number the half-edges in any arbitrary order from $1$ to $\CL_n$.
We start pairing them uniformly at random, i.e.,  we pick an arbitrary unpaired
half-edge and pair it to another unpaired half-edge chosen uniformly at random to form an
edge. Once paired, we remove both from the set
of unpaired half-edges and continue until all half-edges are paired.
We denote the resulting random multi-graph by $\CMD$. Although self-loops and multiple edges may occur, under weak assumptions on the degree sequence (satisfied via Condition \ref{cond:CMFinVar} below), their number is a tight sequence as $n\to\infty$ (see \cite{Jans09} or \cite{Boll01} for more precise results in this direction).

We consider the configuration model for general degree sequences $\bfd_n$, which may be either deterministic or random, subject to mild regularity conditions as $n\to\infty$.  To formulate these conditions, we think of $\bfd_n = (d_v)_{v\in [n]}$ as fixed and choose a vertex $V_n$ uniformly from $[n]$.  Then, the distribution of $d_{V_n}$ is the degree of a uniformly chosen vertex $V_n$, conditional on the degree sequence $\bfd_n$. To ensure that the majority of vertices are connected in the resulting graph, we assume throughout that $d_v\geq 2$ for each $v\in [n]$ (see e.g.\ \cite{JanLuc07} or or \cite[Chapter 10]{Hofs10c}). We make the following key assumption on the degree sequence:

\begin{Condition}[Degree regularity]\label{cond:CMFinVar}
The degrees $d_{V_n}$ satisfy $d_{V_n}\geq 2$ a.s.\ and, for some random variable $D$ with $\prob(D>2)>0$ and $\E(D^2\log^+(D))<\infty$,
	\begin{equation}%\label{}
	d_{V_n}\convd D,
	\qquad
	\E(d_{V_n}^2) \to \E(D^2).
	\end{equation}
Furthermore,
	\begin{equation}%\label{}
	\limsup_{n\to\infty} \E\big(d_{V_n}^2 \log^+(d_{V_n})\big)= \E(D^2\log^+(D)).
	\end{equation}
\end{Condition}

When $\bfd_n$ is itself random, we require that the convergences in Condition \ref{cond:CMFinVar} hold in probability. Next, we define the \emph{size-biasing} $D^\star_n$ of $d_{V_n}:=D_n$ by
	\begin{equation}\label{eq:BiasedD}
	\prob(D^\star_n=k)=\frac{(k+1)\prob(D_n=k+1)}{\E(D_n)}.
	\end{equation}
It is easily checked that uniform integrability following from Condition \ref{cond:CMFinVar} implies that $\Ev[D_n^\star] \to \Ev[D^\star]=\E[D(D-1)]/\E[D]<\infty$ where $D^\star$ is the corresponding size-biasing for $D$. The assumption $d_{V_n}\geq 2$ and non-vanishing variance $\Var(D)>0$ of the degrees implies that $\Ev[D^\star]>1$. 

\subsection{Epidemic model}
Let us now describe the infection model on $\CMD$. Since multiple edges and self-loops play no role in the dynamics, we replace multiple edges by a single edge and replace self-loops.
 We also view each edge $e = \set{u,v}$ in $\CMD$ as two directed edges $(u,v)$ and $(v,u)$.   We consider an SIR (Susceptible-Infected-Removed) process on $\CMD$. Fix a continuous distribution $G$ on $\Rb_+$.  At time $t=0$, start the infection at a uniformly chosen vertex $V_n$. Each infected vertex infects its neighbors at times that are i.i.d.\ with distribution $G$ after the vertex is infected.  This can be modelled by adding i.i.d.\ edge lengths $X_e\sim G$ for every directed edge $e=(v,u)$ between a neighbors $u$ of the vertex $v$ $\in \CMD$. If each vertex $v$ has an i.i.d.\ contagious period $C_v\le  \infty$ after which it recovers,  then once $v$ gets infected only those neighbours $u$ of $v$ get infected that have an infection time $X_{(v,u)} < C_v$. We denote the (possibly non-proper) tail distribution function of $C$ by $\bar H$, i.e.\ $\bar H(x)=\Pv(C>x)$. Finally we assume that if a vertex has been infected once it cannot be infected again and thus transmits infection to its neighbours at most once. We let $(\cF_n(t))_{t\geq 0}$ denote this epidemic process. Here for any fixed $t\geq 0$, $\cF_n(t)$ contains the entire sigma-field of the process till time $t$, thus containing information not only of the set and number of infected individuals by time $t$, but also of the entire sequence of transmissions until this time. We use $|\cF_n(t)|$ for the total number of infected individuals by time $t$, and $|\CA_n(t)|$ for the total size of the \emph{coming generation}: those vertices who are not yet infected but have an infectious neighbour at time $t$ in the graph who is going to infect them some time after $t$. Later we will define a related process $(\widetilde \CF_n(t)), \widetilde \CA_n(t))_{t\geq 0}$ representing the collection of individuals that \emph{would infect} a fixed target individual $w$ \emph{by time $t$ if} were the epidemic to start from them, and the corresponding \emph{coming generation} in this process.
We call this the \emph{backward infection process}, see Section \ref{ss:labelling} for a precise definition.
\section{Results}\label{s:results} 
In this section, we state our main results.
%Fr this, we first set up the required notation. 
Let $\Pn(s)$ denote the proportion of vertices infected by time $s$, i.e., 
	\eqn{
	\Pn(s)= \frac1n\sum_{w\in[n]} \ind \set{\text{vertex } w \text{ infected by time } s}.
	}
We also investigate the \emph{number of infected individuals} on the path from the initial source of the infection to other vertices in $\CMD$. Since the infection times are continuous random variables, there is a.s.\ a \emph{unique} path that realizes the infection between $V_n$ and any other fixed vertex $w\in [n]$, which we call the \emph{infection trail} to vertex $w$. 
We let $H_n(w)$ denote the number of infectives along the trail to $w$ (including $V_n$ and $w$), and  define
	\eqn{
	\Pn(s, h)= 
	\frac1n\sum_{w\in [n]} \ind \set{\text{vertex } w\text{ infected by time } s,\text{ and } H_n(w)\leq h}.
	}
Now fix $n\geq 1$. In Section \ref{s:proofs} we describe how to couple the \emph{epidemic process} and the \emph{backward infection process} $(\cF_n(t), \widetilde \cF_n(t))_{t\geq 0}$  to two independent Crump-Mode-Jagers processes $(\BP_n(t), \widetilde \BP_n(t))_{t\geq 0}$ where each individual from the first generation onwards produces a random number of children with distribution $D_n^\star$ with birth times that are i.i.d.\ variables with cumulative distribution function $G$, and with a possibly finite contagious period $C_v$ whose tail distribution we write as $\bar H$. The root has a slightly different offspring distribution from the rest of the population. Recall that $|\CA_n(t)|, |\widetilde \CA_n(t)|$ stands for the \emph{coming generation} in the infection processes. Condition \ref{cond:CMFinVar} and standard results \cite{Jage75,JagNer84} which we describe in Section \ref{s:proofs} imply that there exists a constant $\la_n>0$ and limit random variables $W_n, \widetilde W_n > 0$ %{\color{red}a.s. such that $\exp(-\lambda_n t) |\cF_n(t)| \convas W_n$ as $t\to\infty$ (see \eqref{eq:W})} 
 a.s.\ such that $\exp\{-\lambda_n t\} (|\CA_n(t)|, |\widetilde \CA_n(t)|)  \convas (W_n, \widetilde W_n)$ as $t\to\infty$ (see \eqref{eq:W}),
where $\lambda_n$ satisfies the equation
%\KJ{again, since there is no size biasing at the root and the root dies immediately, important that $W_n$ here does not have $\Ev(W_n)=1$! Can we possibly introduce $W_{n}$ by $e^{-\la_n s_n} \CI_n(t) := W_{n}(t) \cdot \frac{\deg_{V_n}}{\Ev(D_n^\star) m^\star_n \la_n} $ and this way $W_{n}:=\lim_{t\to\infty} W_n(t)$ has expected value $1$. SB: Good point but I am hoping to introduce just enough notation to state our main results.  }  
	\begin{equation}
	\label{eqn:mainparameters-lamb}
		 \Ev(D^\star_n) \int_{\bR_+} \e^{-\la_n x}\bar H(x)dG(x) = 1
	\end{equation}
and further 
	\[
	(W_n, \widetilde W_n) \convd (W, \widetilde W), \qquad 
	\lambda_n\to \lambda, \qquad\mbox{ as } n\to\infty,
	\]
where $W, \widetilde W$ are the corresponding limit random variables for the branching processes $(\BP(t), \widetilde \BP(t))_{t\geq 0}$ described below in Section \ref{sec:exploration-shortest-path} and \ref{s:BP} and $\lambda$ satisfies \eqref{eqn:mainparameters-lamb} with $D_n^\star$ replaced by $D^\star$.
%\RvdH{Juli: Note that $\expec[W]\neq 1$ here! Juli: ok, I modify the def in \eqref{eq:W}}
%, and $d_n\to d$ is a normalising constant, see below in \KJ{ADD $d_n$ to BP section and refer to it here, also modify it to some other name.} 
Let $\Lambda$ be a standard Gumbel random variable independent of  $(S,\widetilde S)\stackrel{d}{:=} (-\frac{1}{\lambda} \log{W}, -\frac{1}{\lambda} \log \widetilde W)$. Define the function
	\eqn{
	\label{P(t)-def}
	P(t)=\prob\big(\widetilde S -\Lambda/\la+c \leq t\big), \qquad t\in \bR.
	}
Finally let $\Phi(\cdot)$ denote the standard normal cdf. 

Our main theorem describes the asymptotics for the functions $P_n(t), P_n(t,h)$ and shows that these functions follow a deterministic curve with a random time-shift corresponding to the initial phase of the infection:

\begin{Theorem}[Epidemic curve]
\label{thm-epidemic-curve}
Consider the epidemic spread with i.i.d.\ continuous infection times on the configuration model $\CMD$ and infinite contagious periods. % and i.i.d. contagious periods. 
Assuming condition \eqref{cond:CMFinVar}, for each fixed $t \in \bR$, the proportion of infected individuals satisfies
	\eqn{
	\label{Pn(t)-conv}
	\Pn\left(t+\frac{\log{n}}{\lambda_n}\right)\toindis P(t-S),
	}
Further, 
	\eqn{
	\label{Pn(t,h)-conv}
	\Pn\Big(t+\frac{1}{\lambda_n}\log{n}, \alpha_n\log{n}+x\sqrt{\beta\log{n}}\Big)
	\toindis P\big(t-S\big)\Phi(x),
	}
where $\alpha_n$ and $\beta$ are constants arising from the branching process $\BP_n(\cdot)$ and $\BP(\cdot)$, and are defined below \eqref{Ln-lim-pf}.
\end{Theorem}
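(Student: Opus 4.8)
The plan is to reduce both displays to a statement about a single uniformly chosen target vertex $w$ and then average over $w$. Since the contagious periods are infinite, $\bar H\equiv1$ and the epidemic started at $V_n$ is exactly first passage percolation on $\CMD$ with i.i.d.\ edge weights of law $G$: the infection time of $w$ is the weight $\wT_n(w)$ of the optimal $V_n$--$w$ path, and $H_n(w)$ is its number of vertices. Following Section~\ref{s:proofs}, I would grow the forward infection cluster from $V_n$ and, independently, the backward infection cluster from $w$, each in increasing order of FPP weight, and couple them to the independent Crump--Mode--Jagers processes $\BP_n$ and $\wih\BP_n$; under Condition~\ref{cond:CMFinVar} the branching-process approximation for the cluster sizes and for the numbers of active half-edges is accurate up to size $n^{1/2+\eps}$, since the half-edges that pair internally form a vanishing fraction of a cluster of that size. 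By \eqref{eq:W} and Nerman's theorem, $\e^{-\lambda_n s}(|\CA_n(s)|,|\wih\CA_n(s)|)\convas(W_n,\wih W_n)$, and at the relevant (random, $n$-dependent) scale $s\asymp\tfrac{1}{2\lambda_n}\log n$ the numbers of active forward and backward half-edges are $(1+\op(1))c_0W_n\e^{\lambda_n s}$ and $(1+\op(1))c_0\wih W_n\e^{\lambda_n s}$ for the stable-age constant $c_0$.

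The heart of the argument is the meeting-time analysis. A $V_n$--$w$ path is realised when a forward active half-edge is paired to a backward active half-edge; since two prescribed half-edges are paired with probability $(1+o(1))/\CL_n$ and $\CL_n=(1+\op(1))\E[D]n$, growing both clusters until their sizes multiply to order $n$ occurs at FPP-time $\tfrac{1}{2\lambda_n}\log n+\Op(1)$, and a standard extreme-value computation --- the number of admissible forward/backward linking pairs by FPP-time $\rho$ grows exponentially in $\rho$, so the first such pair appears at a time that is a standard Gumbel at scale $\lambda_n^{-1}$ past the critical value --- yields
	\eqn{
	\wT_n(w)=\tfrac{1}{\lambda_n}\log n-\tfrac{1}{\lambda_n}\log(W_n\wih W_n)-\tfrac{1}{\lambda_n}\Lambda_n+c+\op(1),
	}
with $\Lambda_n\convd\Lambda$ standard Gumbel asymptotically independent of $(W_n,\wih W_n)$ and $c$ the constant in \eqref{P(t)-def}. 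Writing $S_n=-\lambda_n^{-1}\log W_n\to S$ and $\wih S_n=-\lambda_n^{-1}\log\wih W_n\to\wih S$ and using $\lambda_n\to\lambda$, this gives, conditionally on the forward cluster (which in the limit determines $S$),
	\eqn{
	\prob\big(w\text{ infected by }t+\tfrac{1}{\lambda_n}\log n \,\big|\, \text{forward cluster}\big)\To\prob\big(\wih S-\Lambda/\lambda+c\le t-S\big)=P(t-S).
	}

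To upgrade this to \eqref{Pn(t)-conv}, write $\Pn(t+\tfrac{1}{\lambda_n}\log n)=\tfrac1n\sum_{w\in[n]}\ind\{w\text{ infected by }t+\tfrac1{\lambda_n}\log n\}$; its conditional expectation given the forward cluster converges to $P(t-S)$ by the above, so it remains to show the conditional variance is $\op(1)$. For this one grows the forward cluster to near-linear size, so that each backward exploration only reaches polynomially smaller size before meeting it and the backward explorations from two $V_n$-typical targets are asymptotically non-overlapping, hence the corresponding indicator events are asymptotically conditionally independent. Then $\Pn(t+\tfrac{1}{\lambda_n}\log n)-\E[\,\cdot\mid\text{forward cluster}]\convp0$, which together with the convergence of the conditional mean gives \eqref{Pn(t)-conv}. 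For \eqref{Pn(t,h)-conv} one adds that, on the event $\{w\text{ infected by }t+\tfrac1{\lambda_n}\log n\}$, the trail to $w$ splits at the linking edge into a forward and a backward segment whose vertex counts are the generation numbers in $\BP_n$ and $\wih\BP_n$ of the individuals realising the connection, each born at FPP-time $\tfrac{1}{2\lambda_n}\log n+\Op(1)$; the central limit theorem for the generation of a typical individual in a CMJ process (the stable-age theory of Jagers and Nerman, as used in \cite{BhaHofHoo12}) makes each segment Gaussian with the stated mean and variance slopes, asymptotically independent of each other and of the fluctuation of $\wT_n(w)$. Hence the factor $\prob(H_n(w)\le\alpha_n\log n+x\sqrt{\beta\log n}\mid w\text{ infected by }t+\tfrac1{\lambda_n}\log n)\To\Phi(x)$ splits off, and repeating the conditioning-and-concentration argument for the doubly constrained indicators gives \eqref{Pn(t,h)-conv}.

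I expect the meeting-time analysis, together with the branching-process input behind it, to be the main obstacle: upgrading the a.s.\ martingale convergence $\e^{-\lambda_n s}|\CA_n(s)|\to W_n$ and its backward twin to control with vanishing \emph{relative} error at the random scale $\tfrac{1}{2\lambda_n}\log n$, uniformly over the coupling window, under only $\E[D^2\log^+D]<\infty$ --- this is what legitimises the Poisson approximation for linking pairs and the asymptotic independence of the Gumbel $\Lambda$ from $(W,\wih W)$. Establishing the generation CLT with the precise constants $\alpha_n,\beta$ and its asymptotic independence from the weight fluctuation is the other substantial ingredient, but it is largely imported from the stable-age machinery of \cite{Jage75,JagNer84} and \cite{BhaHofHoo12}.
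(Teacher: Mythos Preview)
Your proposal is essentially correct and follows the same overall route as the paper: interpret the infinite-contagious-period epidemic as first passage percolation, couple the forward and backward clusters to CMJ processes, obtain the Gumbel via the BHH connection-time analysis (the paper's Theorem~\ref{thm-main-PPP} and Proposition~\ref{prop:connection}), then condition on the forward side to isolate the random shift $S$ and do a first/second moment argument to pass from a single target to the empirical proportion; the trail CLT is indeed imported from stable-age theory exactly as you say.

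The one genuine difference is in how the second moment is controlled. You propose to grow the forward cluster to near-linear size so that any two backward explorations stay small and hence disjoint. The paper instead never leaves the $\sqrt n$ regime: it conditions only on the \emph{early} forward information $\cG_n(s_n)$ with $s_n$ as in Proposition~\ref{xxx} (so the CTBP coupling is still exact), proves an intermediate ``offset'' statement (Proposition~\ref{prop-epi-offset}) in which the time is shifted by $-\lambda_n^{-1}\log W_{s_n}$ and the limit $P(t)\Phi(x)$ is deterministic, and gets the factorisation of the second conditional moment directly from a \emph{three-vertex} version of the FPP convergence, \eqref{eq:shortest-path-prob-two}. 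This is cleaner in two ways: it avoids pushing any cluster past the size where the branching approximation is known to hold, and it replaces the ``backward clusters are disjoint'' argument by a single joint-convergence input. Your asymmetric variant can presumably be made to work, but growing the forward cluster to near-linear size takes you outside the CTBP coupling window, so you would need a separate argument for the forward side at that scale; the paper's route sidesteps this entirely. Finally, the passage from the offset version back to \eqref{Pn(t)-conv}--\eqref{Pn(t,h)-conv} is done in the paper via monotonicity of $t\mapsto P_n(t,\cdot)$ and continuity of $P$, upgrading pointwise to uniform convergence in $t$ and then substituting $s=t+\lambda_n^{-1}\log W_{s_n}$; you have this implicitly but it is worth isolating, since it is what turns the $\convp$ of the offset statement into the $\convd$ with random shift $S$.
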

% \KJ{Remco, $\la_n=\Ev(D^\star_n-1)$ is only true in the exponential case, not for i.i.d. continuous distr, so I deleted that. Also, Remco, I corrected this: before it was $P_n(t{\color{red}-} \frac{\log n}{\la_n})$ but then the rhs is of constant order then the lhs is $\log n / \la$... I also simplified the rhs of the second statement by writing $S$ instead of $\frac1\la \log W$}

%\RvdH{The above Theorem does not work, since we do NOT know that the corresponding BP is supercritical.
%Indeed, we ONLY know this when $C=\infty$ a.s. Given the short time, I would keep the general contageous periods in heuristic arguments, but not in the main result. Also, the BHH results do not apply in the above setting.
%}

%\SB{Give eqref for $\alpha_n$ and $\beta$. RvdH: Still to do.}

\begin{Remark}
Theorem \ref{thm-epidemic-curve} implies that the epidemic sweeps through the graph in an almost deterministic fashion, where the dependence on the initial start of the epidemic only appears in the random shift $S$ in \eqref{Pn(t)-conv}. Further, \eqref{Pn(t,h)-conv} implies that the number of infectives needed to reach a typical vertex in the graph is aymptotically {\em independent} of the time at which the vertex is infected. Much information can be read off from the shape of the curve $t\mapsto P(t)$. For example, the fact that in the initial phase, the infection grows {\em exponentially} is related to the fact that 
$P(t)$ decays exponentially at $t=-\infty$, which, in turn, follows from the fact that $\prob(-\Lambda/\la+c \leq t)$ decays exponentially for $t$ large and negative.
\end{Remark}

\begin{Remark}
We believe this connection between first passage percolation and epidemic models used to prove the above result can easily be generalized to the case with finite contagious times. In this regime,  the forward and the backward branching process have {\em identical} Malthusian rates of growth but different limit random variables, see Section \ref{ss:forwardbackward}. This would extend results in \cite{BarRei13} where one assumes that the degree of all vertices is bounded by some constant $K$ to the general configuration model satisfying Condition \ref{cond:CMFinVar}.
\end{Remark}

\section{Discussion}
Here we briefly describe the connection between our work and related work. 
\begin{enumeratea}
	\item {\bf Epidemic models on networks:} There is an enormous literature on general epidemic models, their behavior on various network models and their connections to other dynamic process; see \cite{BarBartVes08,NewBarWat06} and the references therein for a description of the motivations from statistical physics and see  \cite{DraMas10,Durr06,Ald13} and references therein for pointers to more rigorous results. First passage percolation or shortest path problems play an integral role in our study and we use results in \cite{BhaHofHoo12} for the analysis of such processes on general sparse graph models with general edge distributions. 

	\item {\bf Connection to the results of Barbour and Reinert:}
	In \cite{BarRei13}, the authors determine the epidemic curve for a mean-field model with a Poisson number of infections. This case is equivalent to the infection spread on the \emph{Erd\H{o}s-R\'enyi random graph}. They generalize this to multi-type epidemics, and conclude that a similar result holds true for the configuration model where every vertex has degree bounded $K$ for some fixed constant $K\geq 1$. This restriction allows them to consider infection rates with arbitrary dependence on the number of possible infections created by a vertex. They use an associated multi-type branching processes for their analysis. 
Using the connection to first passage percolation, we show that similar results can be derived for any degree distribution satisfying Condition \ref{cond:CMFinVar}.
\end{enumeratea}

\subsection{Organization of the paper}
%\KJ{Please check this! Remco: Done.}
In Section \ref{s:proofs} we give the idea underlying the proof of Theorem \ref{thm-epidemic-curve} via the connection to first passage percolation. 
The intuitive idea is as follows. The expected proportion of vertices infected by time $t$ equals the probability that a {\em random} individual is infected by time $t$. Hence, we first prove a crucial proposition (Prop.\ \ref{prop:connection}) about the typical distance between two uniformly picked individuals in the graph, and then we perform first and second moment methods on the empirical proportion of infected individuals to obtain the epidemic curve. This approach first appeared in \cite{BarRei13}. We then explain the idea of the proof of Proposition \ref{prop:connection} in \cite{BhaHofHoo12} and a similar, implicitly given, result in \cite{BarRei13}: Both couple the initial phases of the infection to two branching processes, and describe how these clusters connect up. We explain how the connection happens based on the Bhamidi-van der Hofstad-Hooghiemstra (BHH) connection process, which proves that the process of possible connection edges converges to a Poisson process, of which the first point corresponds to the infection time. Essentially the same Poisson process appears in the connection process of \cite{BarRei13}, hence we just highlight the differences and similarities between these two approaches.

\section{Proofs}
\label{s:proofs}
In this section, we provide the proof of our main result Theorem \ref{thm-epidemic-curve}.
We start in Section \ref{sec:exploration-shortest-path} by describing the connection between 
exploration process on the configuration model and branching processes. Section \ref{ss:forwardbackward}
describes the relevant forward and backward continuous-time branching processes (CTBPs).
Section \ref{ss:labelling} provides the coupling between the infection process on the configuration model and the CTBPs. Section \ref{s:BP}, investigates asymptotics for the CTBPs.
Section \ref{ss:BHHconnect} describes how the forward and backward 
CTBP from two uniform vertices meet. Finally in Section \ref{ss:proof_main_theorem} these results are used 
to prove Theorem \ref{thm-epidemic-curve}. The intermediate Section \ref{ss:BRconnect}, we intuitively describe how asymptotics for the connection time is derived by Barbour and Reinert in 
\cite{BarRei13}.

\subsection{Exploration on the configuration model and branching processes}
\label{sec:exploration-shortest-path}

Consider the epidemic process $\cF_n(\cdot)$ with i.i.d.\ infection times and possibly infinite i.i.d.\ contagious period $C_v\in (0,\infty],~ v\in [n]$ with tail distribution $\bar H$. We shall see how this is connected to a shortest path problem on $\CMD$. To each directed edge $(v,u) \in \CMD$ assign an independent random edge length $X_{(v,u)}$ with distribution $G$. The epidemic process can be thought of as a flow starting at vertex $V_n$ at $t=0$ and spreading at rate one through the graph using the corresponding edge-lengths. When the infection hits a non-source vertex $v$ at time $\sigma_v$, thus infecting vertex $v$, each neighbor $u$ of $v$ (other than the neighbor that spread the infection to $v$) will be infected at time $\sigma_v+ X_{(v,u)}$ if $X_{(v,u)}$ is less than $C_v$. Thus the offspring distribution of new infections created by vertex $v$ -- describing the number of infections and infection times created by $v$ after $\sigma_v$  -- has the same distribution as 
	\begin{equation}
	\label{eqn:xi-def}
	\xi_v = \sum_{i=1}^{d_v-1} \delta_{X_i} \ind_{\set{X_i\le C_v}},
	\end{equation} 
where $d_v$ denotes the degree of $v$,  $X_i\sim G$ i.i.d.\ and $C_v\sim H$ is the contagious period of $v$.

\subsubsection*{Local neighborhoods in $\CMD$.} The initial source $V_n$ of the epidemic is picked uniformly at random from $[n]$ and thus has degree distribution $d_{V_n}$ in Condition \ref{cond:CMFinVar}. We next describe the neighborhood of this vertex.  By the definition of $\CMD$, we can construct $\CMD$ from $V_n$ by sequentially connecting the half-edges of $V_n$ to uniformly chosen unpaired half-edges. For any $j\geq 1$, let $N_j^*(n)\approx n p_j$ (by Condition \ref{cond:CMFinVar}) be the number of vertices  with degree $j$, where we exclude $V_n$. Then, for fixed $k\geq 1$, the probability that the first half-edge of $V_n$ connects to a vertex $v\in [n]\setminus \{V_n\}$ with degree $d_v =k+1$ equals   
	\begin{equation}
	\label{eqn:loc-nbhd}
	\frac{(k+1) N^*_{k+1}(n)}{\sum_{v\in [n]} d_v - 1} \approx \frac{(k+1)\prob(D_n=k+1)}{\E(D_n)}.
	\end{equation}
If $V_n$ connects to such a vertex, then this neighbor has $k$ remaining half-edges that can be used to connect to vertices in $\CMD$. Thus the forward degree of each neighbor $V_n$ has a distribution that is approximately equal to $D_n^\star$.  The same is true for the remaining half-edges of $V_n$ and, in fact,
the above approximation continues to hold as long as the neighborhood is not too large. Equation \eqref{eqn:xi-def} and \eqref{eqn:loc-nbhd} suggest that the epidemic process can be approximated by the following branching process $(\BP_n(t))_{t\geq 0}$ with label set $\BP_n(t)\subset \cV:=\set{0}\cup \cup_{n=1}^\infty \bN^n$.  
% We now describe an {\bf exploration process} that constructs the epidemic process simultaneously with the graph $\CMD$ and describe its connection to a branching process $\BP_n$. Initially generate the contagious periods $(C_v)_{v\in [n]}$ iid with distribution $\nu$.     
% Since in the construction of the underlying graph $\CMD$, the choice of the half-edge to be paired is arbitrary, 
% we can start from any set of vertices, and explore the evolution of their infection cluster simultaneously with the construction of the graph. 
% We call this procedure the \emph{exploration process}, which is a version of a Dijkstra Algorithm on 
% the random graph $\CMD$: 
%\RvdH{Update from here!}
\begin{enumeratei}
	\item At time $t = 0$, start with a single individual $\rho=0$ whose offspring distribution is constructed as follows. First generate $d_{V_n}$ possible children and let $(X_i^0)_{1\leq i\leq d_{V_n}}$ be i.i.d.\ with distribution $G$ and independent of $C_0\sim (1-\bar H)$. Then the children of $\rho$ are the set $(0,i)$ such that $X_i < C_0$, labelled in an arbitrary order. The interpretation is that each of these vertices are born at time $X_i^0$. Thus the offspring distribution of the root can be represented as  
	\begin{equation}
	\label{eqn:xi-zero}
		\xi_0:= \sum_{i=1}^{d_{V_n}} \delta_{X_i} \ind_{\set{X_i \leq C_0}}.
	\end{equation}

	\item Every other individual $v\in \cV$ born into the process $\BP_n(\cdot)$, has i.i.d.\ offspring distribution $\xi_v$ with 
	\begin{equation}
	\label{eqn:xi-one}
		\xi_v:= \sum_{i=1}^{D_n^\star(v)} \delta_{X_i^v} \ind_{\set{X_i^v \leq C_v}},
	\end{equation}
where $C_v\sim (1-\bar H)$ is the contagious period,  $D_n^\star(v)$ has the size-biased distribution \eqref{eq:BiasedD} and $X_i^v$ i.i.d.\ $G$. Thus, conditionally on $D_n^\star(v)$, a vertex $(v,i)\in \cV$ is born at time $X_i^v$ after vertex $v$ is born if and only if $X_v^i \leq C_v$.  
\end{enumeratei} 

When $C=\infty$ this coupling between $\cF_n(\cdot)$ and the corresponding branching process $\BP_n(\cdot)$ is carried out in \cite[Section 4]{BhaHofHoo12}. The details and the corresponding error bounds turn out to be rather technical. We give an intuitive idea in Section \ref{ss:labelling} and Theorem \ref{xxx} gives a rigorous error bound for their difference.

% This individual immediately is born giving rise to $\sum_{i=1}^{D_n} \indicator{X_i < C_v}$ future children, with $X_i\sim X$ i.i.d. 
% Each new individual $v_i$ in the branching process waits for a random amount of time which has
% distribution $X_i \indicator{X_i<C_{p(v_i)}}$, i.e., the infection time truncated at the contagious period of the parent vertex, and then is born. If the infection time is larger than the contagious period, then the infection does not happen, i.e. the individual is not born.  If on the other hand $X_i<C_v$, at $X_i$ $v_i$ is born 
% and gives rise to its own reproduction process $\xi_{v_i}:=\sum_{k=1}^{D^\star_n} X_k\indicator{X_k < C_{v_i}}$, where $D^\star_n$ is given in \eqref{eq:BiasedD}. Every individual that is not the root and is eventually born, has an i.i.d. copy of this reproduction function. (By the uniform choice of half-edges, pairing a half-edge to a vertex with $k+1$ half-edges is proportional to the formula given in \eqref{eq:BiasedD}. It can be shown that the degrees in the BP from the first generation on can be coupled to the size biased distribution given in \eqref{eq:BiasedD}.)

\subsection{Forward and backward processes}\label{ss:forwardbackward}
In the previous section, we have described the branching process approximation to the epidemic \emph{forward} in time.  % and \emph{backward} processes in the spread of epidemics. Suppose we already coupled the spread of epidemic on the graph starting from source vertex $v$ to the corresponding BP which we just saw before, and ran it up to some time $t_n^\star$. We suppose that coupling errors are small up to $t_n^\star$. 
Another key aspect of \cite{BarRei13} is the study of the \emph{backward} branching process. For a uniformly chosen vertex $w\in \CMD$ and fixed time $t>0$, the vertex $w$ is infected by time $t$ precisely when there is a chain of infections leading to $w$. Hence, for large time $t$, one can ask if $w$ is in the infection process of one of its neighbours, if that neighbour is in the infection process of one of his neighbours, etc, i.e., we can \emph{trace back the infection path}. In \cite{BarRei13}, this leads to a new approximating branching process, the \emph{backward} branching process with offspring process $\widetilde \xi[0,\infty]$. 

To see the difference between the offspring process $\xi$ going forward  and $\tilde \xi$ consider the case where all contagious periods are a.s.\ finite, i.i.d.\ having cumulative distribution function $H$.  Then, as before,  $\xi=\sum_{i=1}^{D^\star_n} \delta_{X_i} \indicator{X_i<C_v}$ denotes the offspring of the forward process. On the other hand, in the backward process each individual has to be \emph{in the contagious period of its children}, thus resulting in the offspring distribution 
	\be\label{eqn:xi-two}
	\widetilde \xi=\sum_{i=1}^{D^\star_n} \delta_{X_i} \indicator{X_i<C_i},
	\ee
where $C_i\sim H$ are i.i.d. In more complicated infection models 	the backward process turns out to be substantially more complicated to describe. 
 %(for example, infections with less independence between the infectimes tmes along the edges or infections with more than one phase), 
The crucial observation is that in the case \eqref{eqn:xi-two}  $\Ev_n(\widetilde \xi(a,b))=\Ev_n(\xi(a,b))$ for all $0\le a<b \le \infty$ and thus the corresponding expected reproduction measure 
$\widetilde \mu_n(\mathrm dt)$ and $\mu_n(\mathrm dt)$ are the same for all $n$. This implies that when $C<\infty$, the distribution of  the limiting martingale variables defined in \eqref{eq:W} are not the same in the forward and backward processes, but the growth rate $\la_n$ and the multiplying constants for every characteristic under consideration, (see \eqref{eq:asconv}) are the same.

Note that if we take $C=\infty$, which is what we assume for the rest of the paper, the branching processes corresponding to the backward and forward processes are the same with offspring distribution 
	\begin{equation}
	\label{eqn:offspring-dist}
	\xi= \sum_{i=1}^{D_n^\star} \delta_{X_i}, \qquad X_i\sim G \mbox{ are i.i.d.\ random variables.}
	\end{equation}
From now on every quantity $\widetilde Q$ corresponds to the quantity $Q$ in the backward process.
\subsection{Labeling the BP with half-edges on the configuration model}\label{ss:labelling} 
% \KJ{This section possibly needs to be put somewhere else}
% In the Branching process, individuals are not yet associated to vertices in $\CMD$.
We now construct $\CMD$ along with the epidemic process $\cF_n(\cdot)$ on it.  
First we construct the forward process by describing the sequence of new vertices that are infected and the times that these vertices get infected. At each step $k\geq 0$, one of two things can happen:
\begin{enumeratei}
\item {\bf Event I:} A new vertex gets infected via an active half-edge from the set of currently infected vertices connecting to a half-edge in the set of susceptible vertices. The rest of the half-edges connected to this newly infected vertex are now designated to have joined the active half-edges, while the two half-edges that merge to create this connection are removed. 
\item {\bf Event II: } Occasionally two active half-edges in the infected cluster merge to create a new edge. The number of times this happens before time $t_n$ is a tight random variable. This obviously does not increment the infected cluster since a new vertex is not added to the cluster. 
\end{enumeratei} 

We now give a precise description of the construction. Let $\CL_n$ denote the set of half-edges in $\CMD$. For $x\in \cL_n$, let $V(x)\in [n]$ denote the vertex it is attached to, $p_x$ denote the half-edge it is merged to and let $V(p_x) $ denote the vertex incident to this half-edge. 
 
For $k=0$, pick the source of the infection $V_n\in[n]$ uniformly at random. This vertex has $d_{V_n}$ offspring and is born immediately.  Set $\tau_0=0, \CF'(0)=\{V_n\}$. Check if any of these half-edges are merged amongst themselves creating self-loops:
%Further, its degree $d_v$ in the epidemics is converging to $D$. 
%We pair the outgoing half-edges to other half-edges at the birth of $v$ picked uniformly at random without replacement from the remaining half-edges, i.e. immediately when the reproduction process of $v$ starts. 
this happens with probability $o(1)$. The half-edges where this does not take place form the coming generation $\CA_{\tau_0}$ with residual times to birth given by  $\CB_{\tau_0}:=(B_x(\tau_0))_{x\in \CA_{\tau_0}}$ with $B_x(\tau_0)\sim G$ i.i.d. 
For each $x\in \cA_{\tau_0}$, the end point $V(p_x)$ is revealed and infected at time $X_x$.   
Write $\CH_{\tau_0} = \CL_n \setminus \set{x:V(x)=V_n}$ for the initial set of free half-edges. 

For $k\ge1$, the construction proceeds recursively as follows. At this stage, we have the set of active half-edges $\cA_{\tau_{k-1}}$ and free half-edges $\CH_{\tau_{k-1}}$, as well as residual times of birth of the active half-edges $\CB_{\tau_{k-1}}$. 
\begin{enumeratea}
\item Pick half-edge $x_k^\star$ with shortest residual time to birth: $B_k^\star=\min \CB_{\tau_{k-1}}$ and pair it to a uniformly chosen free half-edge $p_{x_k^\star}\in \CH_{\tau_{k-1}} \cup \CA_{\tau_{k-1}}$. %If $p_{x_k^\star} \in \CA_{\tau_{k-1}}$, Event II above happens and this implies a cycle or loop is formed at time $\tau_k$. 
Update time $\tau_k:=\tau_{k-1}+B_k^\star$.

\item Add the vertex $v_k:=V(p_{x_k^\star})$ to the infected vertices $\CF'(\tau_k)$. %(if Event II happens then no new vertex is added). 
Check all other half-edges of $v_k$ (other than $p_{x_k^\star}$) to see if any of them are attached to one of the other active half-edges in $\cA_{\tau_{k-1}}$ and let $\cV_k^\star$ denote the residual set of half edges of $v_k$. More precisely, we draw a Bernoulli variable with success probability equal to the number of active half-edges over the total number of unpaired half-edges. If the Bernoulli equals 1, then we pair the half-edge to a uniform active half-edge, if it equals 0, then
we do not yet pair it.
%\RvdH{Update this! SB Does the above look ok? Remco: Made it more explicit. Juli: I found one-one sentences both in (a) and (b) that were checking the cycles also at some later time, I \%-ed those}

\item Refresh the coming generation: The new set of active half-edges is defined as  
	\[
	\CA_{\tau_k} := \CA_{\tau_{k-1}} \cup \cV_k^\star \setminus \set{x_k^\star, p_{x_k^\star}}.
	\] 
% namely the set of half-edges consisting of removing $x_k^\star$ from $\CA_{\tau_{k-1}}$ and adding the newly found $d_{v_k}-1$ many half-edges to $\CA_{\tau_{k-1}}$ (the half-edges of $V(p_{x_k^\star})$ ignoring the one used to connect to $x_k^\star$). 

\item Refresh residual times to birth 
	\[
	\CB_{\tau_k}:=\set{B_x(\tau_{k-1})-B_k^\star\colon x\in \CB_{\tau_{k-1}} \setminus\set{x_k^\star}} 
	\bigcup \set{X_y \colon y\in \cV_k^\star}, 
	\] 
i.e., we remove $B_k^\star$ from all residual times to birth and add the i.i.d.\ edge weights $X_y$ for newly active half-edges.

\item We refresh the free half-edge-set: $\CH_{\tau_{k}} := \CH_{\tau_{k-1}} \setminus \set{x: V(x) = v_k}$, that is, we remove the half-edges of $v_k$. 
\end{enumeratea}

Let $(\cF_n'(k))_{k\geq 0}$ denote the above discrete-time process.  By construction, the following lemma is obvious:

\begin{Lemma}
\label{lem:exploration-forward}
For any $t> 0$, set $k(t) = \sup\set{k\colon \tau_k \leq t}$. Let $\cF_n^*(t) := \cF_n'(k(t)) $. Then, for the epidemic process on $\CMD$, the distributional equality $(\cF_n(t))_{t\geq 0} \stackrel{d}{=} (\cF_n^*(t))_{t\geq 0}$  holds.
\end{Lemma}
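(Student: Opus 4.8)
\textbf{Proof proposal for Lemma~\ref{lem:exploration-forward}.}

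The plan is to verify that the recursive construction (a)--(e) is nothing more than a particular algorithmic ordering of the pairing of half-edges in $\CMD$ together with the assignment of i.i.d.\ weights $X_e\sim G$ to directed edges, and that the associated continuous-time object $\cF_n^*(t)$ reproduces exactly the law of the epidemic $\cF_n(t)$. First I would argue that the output of the discrete procedure is a uniform configuration-model pairing: at each step (a) a not-yet-paired half-edge $x_k^\star$ is matched to a half-edge chosen uniformly among all currently unpaired half-edges (the set $\CH_{\tau_{k-1}}\cup\CA_{\tau_{k-1}}$, minus $x_k^\star$ itself), and step (b) performs the analogous uniform matching for the remaining half-edges of the freshly discovered vertex $v_k$; since the order in which one chooses which half-edge to pair next does not affect the law of the resulting perfect matching, the multigraph produced has the law of $\CMD$. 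Simultaneously, each directed edge receives weight $X_y$ as it is created, so the joint law of $(\CMD,(X_e)_e)$ is the correct one. This is where the hypothesis $d_v\ge 2$ and the tightness statement in Event II are used only for interpretation, not for the distributional identity itself.

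Next I would check that, conditionally on this pairing and these weights, the bookkeeping variables track the epidemic correctly. The key point is that $\tau_k$ is the time the $k$-th new vertex is infected: the residual-time vector $\CB_{\tau_{k-1}}$ records, for every active half-edge, the remaining time until the infection would traverse it, step (a) advances global time by the minimum such residual time $B_k^\star$ and selects the half-edge achieving it, and step (d) decrements all other residuals by $B_k^\star$ and introduces fresh i.i.d.\ weights $X_y$ for the half-edges of $v_k$ (which start their clocks exactly when $v_k$ is infected). Because the infection times are i.i.d.\ continuous, the minimum in step (a) is a.s.\ uniquely attained, so there is no ambiguity, and the set $\cF_n'(k)$ is precisely the set of vertices infected by the time the $k$-th infection occurs. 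A short induction on $k$ then gives $\cF_n'(k(t))=\{\text{vertices infected by time }t\}$ with the matching sigma-field, i.e.\ $\cF_n^*(t)\ed\cF_n(t)$ as processes in $t$; one invokes here that the superposition/thinning structure of the residual times is exactly the memoryless re-labelling of i.i.d.\ $G$-weights (no memorylessness of $G$ is needed, only that weights on not-yet-traversed edges are independent of the past, which holds because those edges' endpoints/weights have not been revealed).

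The main obstacle, and the only genuinely delicate bit, is Event II, i.e.\ step (b): one must be sure that the Bernoulli-thinning description (pair each residual half-edge of $v_k$ to a uniform active half-edge with probability equal to the current ratio of active to unpaired half-edges, else leave it free) is \emph{equivalent} to the honest configuration-model rule of pairing each such half-edge to a uniformly chosen unpaired half-edge, when restricted to the event that matters — namely that no new vertex is added. The resolution is that ``the partner is an already-active half-edge'' has exactly the stated probability, and conditionally on that event the partner is uniform among active half-edges, while conditionally on the complementary event one may defer the pairing to a later step without changing the law (deferring a uniform pairing is harmless by exchangeability of the matching). Since this is ``obvious by construction'' as the lemma asserts, I would spell out only this equivalence and the induction skeleton, and otherwise treat the identification of the two laws as immediate.
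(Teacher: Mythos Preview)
Your proposal is correct and is simply a careful unpacking of what the paper treats as self-evident: the paper offers no proof at all beyond the sentence ``By construction, the following lemma is obvious,'' so your argument is not a different route but rather the explicit verification that the paper leaves to the reader. The three ingredients you isolate---exchangeability of the configuration-model pairing (so that choosing which half-edge to pair next according to minimal residual time does not bias the graph law), the induction showing $\tau_k$ is the $k$th infection time, and the equivalence between the Bernoulli description in step~(b) and honest uniform pairing---are exactly what ``by construction'' is meant to encode.
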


%\RvdH{In \cite[Section 4]{BhaHofHoo12}, we first check whether we create a cycle. Otherwise,
%the time at which we explore cycles and collision edges has the wrong distribution! I fear that the current construction is not quire correct!}

\noindent{\bf Coupling to a branching process.} 
In \cite[Section 4]{BhaHofHoo12} it is shown that the above construction of the epidemic process can be coupled to a branching process $\BP_n(\cdot)$ where the root has offspring distribution \eqref{eqn:xi-zero} and all other individuals have distribution \eqref{eqn:xi-one} (both with $C_v = \infty$). The intuitive idea is as follows: for the two events above; Events I correspond to creation of new vertices both in $\cF_n$ and $\BP_n$ while Events II correspond to the creation of {\bf artificial vertices} in $\BP_n$. Now let $\BP$ denote the ($n$-independent) branching process where the offspring distributions in \eqref{eqn:xi-zero}, \eqref{eqn:xi-one}, we replace $d_{V_n}, D_n^\star$ by their distributional limits $D, D^\star$.   Let $d_{\sss{\mathrm TV}}(\cdot, \cdot)$ denote total variation distance between these mass functions on $\bN$. Define $t_n$, $s_n\to\infty$ with $\set{s_n}_{n\geq 1}$ being a sequence satisfying
 \begin{equation}
 \label{eqn:tn-sn}
 	t_n = \log n/\la_n, \qquad \e^{\lambda s_n} d_{\sss {\rm TV}}(D_n^\star, D^\star)\to 0.
 \end{equation} 
\begin{Proposition}[{\cite[Prop 2.4]{BhaHofHoo12}}]
\label{xxx}
There exists a coupling of the processes $(\cF_n(t))_{0\leq t\leq s_n}$ and $(\BP(t))_{0\leq t\leq s_n}$ such that 
	\[
	\prob\Big((\cF_n(t))_{0\leq t\leq s_n} \neq (\BP(t))_{0\leq t\leq s_n}\Big)\to 0 \qquad \mbox{as } n\to\infty.
	\]
Further, there exists a coupling between $\cF_n$ and $\BP_n$ such that the above bound holds with $\BP$ replaced with $\BP_n$.%, and $\cF_n(t)=\BP_n(t)$ for every $t\leq s_n$ whp as $n\to\infty$. 	
\end{Proposition}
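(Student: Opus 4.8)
\noindent\emph{Proof strategy.}
The plan is to build the coupling in two independent stages whose composition yields both assertions. By Lemma~\ref{lem:exploration-forward} one may work with the explicit discrete-time exploration $(\cF_n^*(t))_{t\ge 0}$ of Section~\ref{ss:labelling} in place of $(\cF_n(t))_{t\ge 0}$. Stage~1 couples $\cF_n^*$ to the $n$-dependent process $\BP_n$ — this gives the ``Further'' sentence of the statement — and Stage~2 couples $\BP_n$ to its $n$-independent limit $\BP$; composing the two and invoking $(\cF_n(t))_{t\ge0}\ed(\cF_n^*(t))_{t\ge0}$ delivers the main assertion.

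\smallskip\noindent\emph{Stage 1: $\cF_n^*\leftrightarrow\BP_n$.}
I would run the exploration and $\BP_n$ in lockstep: at step $k$ the exploration pairs the active half-edge of smallest residual birth time to a uniformly chosen free half-edge, and in $\BP_n$ the corresponding individual is born at the matching time; on the good event this half-edge reaches a previously unvisited vertex $v_k$ whose number of free half-edges is then coupled to $D_n^\star$. Two sources of discrepancy arise: (i) a \emph{collision}, where the chosen half-edge is paired to an already-active one (Event~II of Section~\ref{ss:labelling}), which creates an artificial vertex in $\BP_n$; and (ii) a \emph{degree mismatch}, since the forward degree of $v_k$ follows the size-biased law among the \emph{remaining} half-edges, which has drifted away from $D_n^\star$. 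At step $k$, (i) has conditional probability at most $|\cA_{\tau_{k-1}}|/(\CL_n-2k)$, and the total-variation cost of (ii) is $\Op(k/\CL_n)$ once $\Op(k)$ half-edges have been depleted. By the branching-process growth estimates recalled in Section~\ref{s:BP} — which use the $\E(D^2\log^+D)<\infty$ part of Condition~\ref{cond:CMFinVar} — the number of exploration steps up to time $s_n$ is $\Op(\e^{\la s_n})$, while $\CL_n\asymp n$. Taking $s_n\to\infty$ slowly enough that $\e^{\la s_n}=o(\sqrt n)$, which is compatible with \eqref{eqn:tn-sn}, a union bound over the $\Op(\e^{\la s_n})$ steps shows that the probability that (i) or (ii) ever occurs on $[0,s_n]$ is $\op(1)$, the total contribution of both being $\Op(\e^{2\la s_n}/n)$. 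Since both $\cF_n^*$ and $\BP_n$ attach i.i.d.\ $G$-distributed birth times, on the complementary event the time-marked family trees — and hence the two processes restricted to $[0,s_n]$ — coincide. This is the construction carried out with explicit error bounds in \cite[Section~4]{BhaHofHoo12}.

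\smallskip\noindent\emph{Stage 2: $\BP_n\leftrightarrow\BP$.}
Here I would couple $\BP_n$ and $\BP$ individual by individual. For the root the offspring laws are those of $d_{V_n}$ and $D$; both live on $\bN$ and $d_{V_n}\convd D$, so by Scheff\'e's lemma $d_{\sss\mathrm{TV}}(d_{V_n},D)=\op(1)$ and the two roots can be coupled to have identical offspring with probability $1-\op(1)$. Each non-root individual has offspring law $D_n^\star$ in $\BP_n$ and $D^\star$ in $\BP$; coupling every such pair maximally, the offspring sets disagree with probability $d_{\sss\mathrm{TV}}(D_n^\star,D^\star)$. The number of individuals of $\BP$ born by time $s_n$ is $\Op(\e^{\la s_n})$, so a union bound shows that the two family trees differ on $[0,s_n]$ with probability $\Op\big(\e^{\la s_n}\,d_{\sss\mathrm{TV}}(D_n^\star,D^\star)\big)$, which tends to $0$ by the defining property \eqref{eqn:tn-sn} of $s_n$. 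Composing this with Stage~1 and with Lemma~\ref{lem:exploration-forward} completes the argument.

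\smallskip\noindent\emph{Main obstacle.}
The hard part is Stage~1: one must control collisions and degree drift simultaneously and uniformly while the explored cluster is allowed to grow to size $\e^{\la s_n}$, which forces a careful choice of how fast $s_n$ may tend to infinity — trading the branching growth $\e^{\la s_n}$ against $\CL_n\asymp n$ — and requires Condition~\ref{cond:CMFinVar} to control both the maximal degree and the empirical remaining-degree distribution uniformly over the first $\Op(\e^{\la s_n})$ exploration steps. These are precisely the estimates established in \cite[Section~4]{BhaHofHoo12}, which is why here I would simply cite them.
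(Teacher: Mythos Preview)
The paper does not prove this proposition: it is stated as \cite[Prop.~2.4]{BhaHofHoo12} and only an intuitive description is given in the paragraph preceding it (Events~I create genuine vertices in both processes, Events~II create artificial vertices in $\BP_n$; then $\BP_n$ is compared to $\BP$ via the total-variation condition \eqref{eqn:tn-sn}). Your two-stage sketch --- Stage~1 controlling collisions and degree drift to couple $\cF_n^*$ with $\BP_n$, Stage~2 coupling $\BP_n$ with $\BP$ individual by individual via $d_{\sss\mathrm{TV}}(D_n^\star,D^\star)$ --- is exactly that approach fleshed out, and you correctly identify \cite[Section~4]{BhaHofHoo12} as the place where the hard estimates live. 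Nothing is missing relative to what the present paper provides.
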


\subsection*{Exploration of the backward infection process} 
After time $t_n^\star\approx\frac{1}{2\la_n} \log n$ specified later, we freeze the forward cluster. The `half-edges sticking out' of this cluster namely the set of active edges are exactly the ones in the coming generation $\CA_{t_n^\star}$. We start labelling the backward process \emph{conditional on the presence of the forward process}.
%i.e. we use only the half-edges in $\widetilde \CH_0=\CH_{t_n^\star}\bigcup \CA_{t_n^\star}$. 
%We will see below that by $t_n^\star$ we only removed $O(\sqrt{n})$ half-edges whp, so the coupling still remains valid in the backward cluster. 
This labelling is slightly different than the labelling of the forward cluster, since we also want to keep track when we connect to a half-edge in the coming generation $\CA_{t_n^\star}$. 

At each step $k\geq 0$, three things can happen in the backward process: Event I and II defined above in the forward process or
\begin{enumeratei}
\item[(iii)] {\bf Event III:} Occasionally we pair a half-edge in the backward cluster to a half-edge in the coming generation of the forward cluster $\CA_{t_n^\star}$. This  means that a \emph{collision} happens between the two processes. 
\end{enumeratei} 
We now give a precise description of the construction.

For $k=0$, pick the source of the backward-infection $\widetilde V_n\in[n]\setminus \{ \mathrm{\CF}_n({t_n^\star}) \}$ uniformly. This vertex has $d_{\widetilde V_n}$ offspring and is born immediately.  Set $\widetilde \tau_0=0$. 
%Further, its degree $d_v$ in the epidemics is converging to $D$. 
%We pair the outgoing half-edges to other half-edges at the birth of $v$ picked uniformly at random without replacement from the remaining half-edges, i.e. immediately when the reproduction process of $v$ starts. 
Pair the $d_{\widetilde V_n}$ outgoing half-edges immediately, uniformly at random without replacement from $\CA_{t_n^\star} \cup \CH_{t_n^\star}$. Check if any of these half-edges are merged amongst themselves creating self-loops (Event II) or collision edges (Event III). Set the collision edges and residual collision times and the coming generation or \emph{active edges} for Event I by  
	\[ 
	\ba \CC_0&:= \{ ((y, p_y), B_{p_y}(t_n^\star)): V(y)=\widetilde V_n,  p_y \in \CA_{t_n^\star}\},\\
	\widetilde \CA_0&:= \{(y,p_y): V(y)=\widetilde V_n, p_y \notin \CA_{t_n^\star}, V(p_y) \neq \widetilde V_n\}.
	\ea 
	\] 
For Event III: if there is a $(y, p_y)$ with $p_y \in \CA_{t_n^\star}$ forms an edge between $\widetilde V_n$ and the forward cluster. From this edge there is already some time 'eaten up' by the forward cluster: the remaining time on this edge is $B_{p_y}(t_n^\star)$. Remove Event II pairs $(y, p_y)$ from the set of active edges: they form a self-loop. For Events I, the initial remaining times to birth $\widetilde \CB_{0}:=\{B_x(\widetilde \tau_0), x\in \widetilde \CA_{0}\}$ with $B_x(\widetilde \tau_0)\sim G$ i.i.d. For each $y\in \widetilde \cA_{0}$, the end point $V(p_y)$ is \emph{revealed immediately but infected only} at time $X_y$.   The initial set of free half-edges is
	\[
	\widetilde \CH_{0} = \left(\CA_{t_n^\star} \cup \CH_{t_n^\star}\right)  \setminus 
	\left(\{y: V(y) = \widetilde V_n\} \cup \{ p_y: V(y)=\widetilde V_n\}\right).
	\] 
In more detail, we remove from $\CA_{t_n^\star} \cup \CH_{t_n^\star}$ the half-edges of $\widetilde V_n$ and their pairs. For $k\ge1$ the construction proceeds as follows. At this stage we have the set of active edges $\widetilde \cA_{\widetilde \tau_{k-1}}$ and free half-edges $\widetilde\CH_{\widetilde \tau_{k-1}}$ as well as residual times of birth of the active edges $\widetilde \CB_{\widetilde \tau_{k-1}}$. This is described in the following process:

\begin{enumeratea}
\item Pick an active edge $(\widetilde x_k^\star, p_{\widetilde x_k^\star}) \in \widetilde \CA_{\widetilde \tau_{k-1}}$ with shortest residual time to birth: $\widetilde B_k^\star=\min \widetilde \CB_{\widetilde \tau_{k-1}}$.  %If $p_{\widetilde x_k^\star} \in \widetilde \CA_{\widetilde \tau_{k-1}}$, Event II above happens and this implies a cycle or loop is formed at time $\widetilde \tau_k$.
\item Set the time $\widetilde \tau_k:=\widetilde \tau_{k-1}+\widetilde B_k^\star$.
\item Add the vertex $\widetilde v_k:=V(p_{\widetilde x_k^\star})$ to the infected vertices $\widetilde \CF(\widetilde \tau_k)$ %(if Event II happens then no new vertex is added). 
\item refresh the coming generation and the collision edges: pair all half-edges $y: V(y)=\widetilde v_k$ sequentially to a uniformly chosen half-edge $p_{y}\in \widetilde \CH_{\widetilde \tau_{k-1}} \cup \widetilde \CA_{\tau_{k-1}}$.

The new set of collision and active edges is defined as  
	\[ 
	\ba 
	\CC_{\widetilde \tau_k}&:= \CC_{\widetilde \tau_{k-1}} \cup 
	\{ ((y, p_y), B_{p_y}(t_n^\star) ): V(y)=\widetilde v_k, p_y \in \CA_{t_n^\star} \},
	\\
	\widetilde \CA_{\widetilde \tau_k} 
	&:= \widetilde \CA_{\tau_k} \cup \set{(y, p_y): V(y) = \widetilde v_k, p_y \notin \CA_{t_n^\star}\cup \widetilde \CA_{\widetilde \tau_{k-1}}} 
	\setminus \set{x_k^\star, p_{x_k^\star}},
	\ea
	\] 
namely, the new collision edges are those among the $d_{\widetilde v_k}-1$ newly found half-edges whose pair is an active half-edge in the forward process, and the remaining time on this edge is $B_{p_y}(t_n^\star)$. If $p_y\in \widetilde \CA_{\tau_{k-1}}$, then Event II happens: we have found a cycle. If none of this is the case, then the edge $(y,p_y)$ becomes an active edge with residual time to birth $B_y=X_{y}\sim G$ i.i.d.
\item Refresh the residual times to birth 
	\[
	\widetilde \CB_{\tau_k}:=
	\set{ B_x(\widetilde\tau_{k-1})-\widetilde B_k^\star
	\colon x\in \widetilde \CB_{\widetilde \tau_{k-1}} 
	\setminus\set{\widetilde x_k^\star}} \bigcup 
	\set{X_y: V(y) =\widetilde v_k, y\neq p_{\widetilde x_k^\star}, p_y \notin \CA_{t_n^\star}\cup \widetilde \CA_{\widetilde \tau_{k-1}}}.
	\] 
That is, we subtract $B_k^\star$ from all residual times to birth and add the i.i.d.\ edge weights $X_y$ for newly active edges (but we do not add the remaining time of collision edges and we remove cycle-edges too).
\item Refresh the free half-edge-set: $\widetilde \CH_{\tau_{k}} = \widetilde \CH_{\tau_{k-1}} \setminus (\set{y: V(y) = \widetilde v_k} \cup\set{ p_y: V(y)=\widetilde v_k})$, namely remove the half-edges of $\widetilde v_k$ and their pairs. 
\end{enumeratea}

The main difference of this process and the forward process is that here we pair the new outgoing half-edges $y\in \{1, \dots, d_{\widetilde v_k}-1\}$ immediately at the birth of $\widetilde v_k$, and we check if this edge collides with the forward cluster or becomes active. (Hence in the backward process, the pairs $(x, p_x)$ form the coming generation.)
%\KJ{Add sg like Lemma \ref{lem:exploration-forward} and Proposition '\ref{xxx} }
The statement of Proposition \ref{xxx} remains valid for this process as well, i.e. the coupling between the backward cluster and $\BP$ can be established.

\noindent {\bf The total length of collisions.}
A \emph{collision} happens at time $\widetilde \tau_k$ for some $k$ if the vertex $\widetilde v_k$  has a half-edge $y$ with a pair $p_y\in \CA_{t_n^\star}$ of the forward process. Since we check this exactly at the time when $\widetilde v_k$ becomes infected, and there is still a residual time $B_{p_y}(t_n^\star)$ on this edge, the length of this connection is exactly $t_n^\star + B_{p_y}(t_n^\star) + \widetilde \tau_k$.

Note that $p_y$ is a uniformly picked half-edge from the coming generation $\CA_{t_n^\star}$, hence its residual time to birth $B_{p_y^\star}(t_n^\star)$ converges to the empirical residual time to birth distribution in \eqref{eq:remainings} below. Also note that this is independent of the backward process infection time $\widetilde \tau_k$.

\subsection{Branching processes}
\label{s:BP}
In this section we set up the branching process objects including the stable-age distribution theory \cite{Ner81} required to prove the result.  Fix a point process $\xi$ on $\bR_+$ and consider a branching process $\BP(\cdot)$ with vertex set a subset of $\cN:=\set{0}\cup \cup_{n=1}^\infty \bN^n$, started with one individual $0$ at $t=0$ with each vertex having an i.i.d. copy of $\xi$. Here an individual is labeled $x=(i_1i_2 ,\dots,i_n)$ if $x$ is the $i_n$th child of the $i_{n-1}$th child of $\ldots$ of the $i_1$th child of the root. For $t\geq 0$, let $\xi[t]$ denotes the number of points in $[0,t]$. Write $\mu(t)=\Ev[\xi(t)]$ for the corresponding intensity measure. Assume $\mu(\cdot)$ is non-lattice, there exists a Malthusian parameter $\la\in(0,\infty)$ satisfying  
	\be
	\label{eq:maltusian} 
	\int_0^{\infty} \e^{-\la t} \mu(\mathrm d t) =1, 
	\ee
and with integrability assumptions for this parameter $\lambda$, 
	\begin{equation}
	\label{eq:meanage-xlogx}
	m^\star:=\int_0^\infty t \e^{-\la t} \mu(\mathrm d t) < \infty, \qquad  
	\Ev\left( \int_0^\infty \e^{-\la t } \xi(\dd t) \cdot \log^+ 
	\left(\int_0^\infty \e^{-\la t } \xi(\dd t) \right) \right)<\infty.	
	\end{equation}
 % \begin{enumerate}
 %  \item the measure $\mu$ is not concentrated on any lattice $0,h,2h,\dots$ for $h>0$.
 %  \item There exists such that 
 %  
 %  \item The first moment of $\e^{-\la t} \mu(\mathrm dt)$ is finite, i.e. 
 %  \be\label{eq:meanage} \ee
 %  This parameter is called the ``average age at childbearing''.
 %  \item\label{eq:xlogx} The $\mathbf{x\log x}$ criterion holds under the $\la$-discounted measure: \[ \]
 %  \end{enumerate}
 
For $v\in \BP$, write $\sigma_v$ for its birth time and $\xi_v$ for its offspring process. Let $\set{\set{\phi_v(\cdot)}: v\in \BP}$ be a family of i.i.d.\ stochastic processes with $\set{\phi_v(t): t\geq 0}$ measurable with respect to the offspring distribution $\xi_v$, $\phi_v(t)\geq 0$ for $t\geq 0$ and let $\phi_v(s) = 0$ for $s<0$. The interpretation of such a functional, often called a \emph{characteristic} \cite{Jage75,Ner81,JagNer84} is that it assigns a score $\phi_v(t)$ when vertex $v$ has age $t$. We write $\phi:=\phi_0$ to denote this process for the root.   
 % Hence, the basic probability space is \[ (\Omega, \CB, \CP) = \prod_{x\in \CF} (\Omega_x, \CB_x, \CP_x), \]
 %  where $(\Omega_x, \CB_x, \CP_x)$ gives the probability space of $\xi_x$ and $\CF=\{0\}\cup \bigcup_{n=1}^\infty \N^n$.
 %  
% Suppose we have a function $\phi(t, \xi)>0$ for $t>0$, assigning some (random) score to an individual (born or not) at age $t$ depending on its reproduction process $\xi$, measurable to the product measure. Such a function is called \emph{characteristic}. 
The branching process counted according to this characteristic is defined as
	\[ 
	Z_t^\phi:= \sum_{x\in \mathcal \BP(t)} \phi_x(t-\sigma_x).
	\]
Theorem 5.4 and Corollary 5.6 in \cite{Ner81} shows that there exists a random variable 
$W\ge 0$ with $\Ev[W]=1$ such that for any characteristic $\phi$ satisfying mild integrability conditions one has 
   	\be
	\label{eq:asconv} 	
	\e^{-\la t} Z_t^{\phi} \longrightarrow W \cdot
	\frac{\int_0^\infty \e^{-\la t} \Ev(\phi(t)) \mathrm dt }{m^\star}\quad \mbox{a.s. }
	\ee
Moreover, for two characteristics $\phi_1$ and $\phi_2$ we have
	\be 
	\label{eq:ratioconv} 
	\frac{Z_t^{\phi_2}}{Z_t^{\phi_1}} 
	\longrightarrow \frac{\int_0^\infty \e^{-\la t} \Ev(\phi_2(t)) \mathrm dt }
	{\int_0^\infty \e^{-\la t} \Ev(\phi_1(t)) \mathrm dt } \quad \mbox {a.s. on } \{W>0\}.
	\ee

Now we apply this general theory for our epidemic - exploration process on $\CMD$.  We fix $n$ first. Recall that the epidemic process $\cF_n(\cdot)$ on $\CMD$ is approximated by a branching process $\BP_n$ with offspring process $\xi= \sum_{i=1}^{D^\star_n} \delta_{X_i}$. There is a slight modification for the distribution of the root, however this does not effect the limit theorems above (other than the limit random variable having $\E(W_n) \neq 1$). Recall the Malthusian rate of growth parameter $\lambda_n$ from \eqref{eqn:mainparameters-lamb}. The other parameters (with $n$ fixed) are calculated as
 %\begin{eqnarray}\label{eq:mainparameters} 
%\mu(t) &= \Ev(D^\star_n) G(t), \quad &\mu(t)=\Ev(D^\star_n) (1-\e^{-t}),\\
 %\mu(\mathrm dt)&=\Ev(D^\star) G(\mathrm dt) \quad &\mu_{\mathrm{Exp}}(\mathrm dt)=\Ev(D^\star) \e^{-t}\mathrm dt\\
%1&= \Ev(D^\star) \Ev(e^{-\la X}),  \quad &\la_{\mathrm{Exp}} =\Ev(D^\star)-1,\\
%m^\star&=\Ev(D^\star) \int_0^\infty t e^{-\la t} G(\mathrm d t ), \quad &m^\star_{\mathrm{Exp}}=1/(\la +1)=1/E(D^\star).\end{eqnarray}
	\begin{equation}
 	\label{eq:mainparameters}
 	\mu_n(t) := \Ev(D^\star_n) \int_0^t \bar H(x) G(\dd x), \qquad 
	\mu_n(\mathrm dt): =\Ev(D^\star_n) \bar H(t)G(\mathrm dt), \qquad 	
	m^\star_n =\Ev(D^\star_n) \int_0^\infty t \e^{-\la_n t} \bar H(t)G(\mathrm d t).
 	\end{equation}

The parameter $m_n^\star$ is called the \emph{mean of the stable age distribution} or mean age at childbearing.
In order to establish the connection between two infected clusters in the graph, we shall need the size of the so called \emph{coming generation} (i.e., those individuals who will be born after time $t$ but their mother was born before time $t$),
and the empirical distribution of the \emph{residual time to birth} of a uniformly picked individual in the coming generation. Asymptotics for these objects are derived by choosing appropriate characteristics. Fix $s>0$. If we set $\phi^s(t):=\xi[t+s,\infty)$ then $Z_t^{\phi^s}=\sum_{x\in \CF} \xi_x[t-\sigma_x+s,\infty]$ counts the number of children of already born individuals whose birth date is at least $s$ time units from now.
In particular,  we write $A_t^d:=Z_t^{\phi_0}=\sum_{x\in \CF} \xi_x[t-\sigma_x,\infty]$ counting the size of the coming generation (usually referred to as \emph{alive} individuals in CTBP literature) in a BP with reproduction measure $\mu_n$ in \eqref{eq:mainparameters}.  (We add the superscript $d$ for \emph{delaying} the process by one generation, i.e.\  the root here has also $\mu_n$)
We calculate using \eqref{eq:mainparameters} that in our case $\Ev(\phi_0)=\Ev(D^\star_n)\cdot \int_t^\infty \bar H(x) G(\mathrm d x)$ hence:
	\be
	\ba
	\label{eq:asalivesdelayed} 
	\e^{-\la_n t}A_t^d&= \e^{-\la_n t}Z_t^{\phi_0} \longrightarrow  W^d_n \cdot 
	\frac{\int_0^\infty \e^{-\la_n t} \Ev(D^\star_n)\int_t^\infty \bar H(x)G(\mathrm d x) \mathrm dt}
	{m^\star_n}\\
	&=W^d_n \cdot \frac{\Ev(D^\star_n) \int_0^\infty \bar H(x)G(\mathrm dx)-1}{m^\star_n \la_n}= W_n^d\cdot \frac{\mu_n(\infty)-1}{m_n^\star \la_n} \quad \mbox{ a.s.}
	\ea
	\ee
Now, to match the $\BP$ to the exploration process $\CF_n(t)$ on $\CMD$ to have the same reproduction function at the root, we introduce the following $BP$ via the size of the coming generation by 	
	\[A_t:=\sum_{i=1}^{D_n} \left(\indicator{t<X_i<C_v}+A _{t-X_i}^{d,(i)}
	\indicator{X_i< t\wedge C_v}\right), 
	\]
where $A^{d,(i)}_t$ are i.i.d. copies of $A_t^d$ in \eqref{eq:asalivesdelayed}. $A_t$ corresponds to $|\CA_n(t)|$, i.e. the number of active half-edges in $\CF_n(t)$. Multiplying by $e^{-\la_n t}$ and using \eqref{eq:asalivesdelayed} gives the convergence
	\be
	\ba\label{eq:alives} \e^{-\la_n t} A_t&= \e^{-\la_n t}\sum_{i=1}^{D_n} \indicator{t<X_i<C_v} +\sum_{i=1}^{D_n} \e^{-\la_n X_i}\indicator{X_i<t\wedge C_v} \left( \e^{-\la_n (t-X_i) } A_{t-X_i}^{d,(i)}\right) \\
&\toas \sum_{i=1}^{D_n} \e^{-\la_n X_i} \indicator{X_i<C_v}W^{d,(i)}_n \frac{\mu_n(\infty)-1}{\la_n m^\star_n}, \ea\ee
with $W^{d,(i)}_n$ i.i.d. copies of $W_n^d$. Since $\Ev(\e^{-\la_n X_i}\indicator{X_i<C_v}) = \frac{1}{\Ev(D^\star_n)}$ by \eqref{eqn:mainparameters-lamb}, and $X_i$ is independent of $W^{d,(i)}_n$, we can introduce the limiting random variable $W_n$ in \eqref{eqn:mainparameters-lamb}:
	\be
	\label{eq:W} 
	W_n := \sum_{i=1}^{D_n} \e^{-\la_n X_i} \indicator{X_i<C_v}W^{d,(i)}\frac{\mu_n(\infty)-1}{\la_n m^\star_n} , 
	\ee
and then \eqref{eq:alives} implies
	\be
	\label{eq:asalives} \e^{-\la_n t} A_t \toas 
	W_n \quad \mbox{ with } \quad \Ev[W_n]= \frac{\Ev[d_{V_n}](\mu_n(\infty)-1)}{\Ev[D^\star_n]\la_n m^\star_n}. 
	\ee
For infinite contagious period we have $\mu_n(\infty)-1 = \Ev[D^\star_n-1]$.
\begin{comment}
\KJ{maybe this remark can be cut}
\begin{Remark} \normalfont This is similar to the convergence of the total number of alive individuals in \cite[Chapter III.7, Thm 1. and 2]{athreya-ney-book}, where $\e^{-\la t}A_t^d \to W$ a.s.
We emphasise that the extra factor $\frac{1}{m^\star}$ arises from the fact that here, in our case \emph{the root immediately dies} and hence its children are already counting to the coming generation. This means that here $\e^{-\la t}Z_t^{\phi_0}= \sum_{i=1}^{D^\star} \tilde W_i$  where $\tilde W_i$ is the martingale limit in \cite[Chapter III.7, Thm 1. and 2]{athreya-ney-book}.
\end{Remark}
\end{comment}
The ratio convergence in \eqref{eq:ratioconv} and $\Ev(\phi_s)=\Ev(D^\star_n)\cdot \int_{t+s}^\infty \bar H(x) G(\dd x)$ implies that the empirical `residual time to birth' distribution converges to a random variable:
	\be
	\label{eq:remainings} 
	\frac{Z_t^{\phi_s}}{Z_t^{\phi_0}}\toas \frac{\Ev(D^\star_n)\int_0^\infty 
	\e^{-\la_n t} \int_{t+s}^\infty \bar H(x) G(\dd x) \mathrm dt}
	{\Ev(D^\star_n)\int_0^\infty \e^{-\la_n t} \int_{t}^\infty \bar H(x) G(\dd x)\mathrm dt} 
	= \frac{\Ev(D^\star_n)}{\mu(\infty)-1} \int_s^\infty (1-\e^{\la_n (s-x)} )
	\bar H(x)G(\mathrm d x):=1-F_{\sss R}^{\sss{(n)}}(s).
	\ee
This is the limiting probability that a uniformly picked individual from the `coming generation' will be born after an extra $s$ time units. 

Now we have set the stage for the branching processes that approximate the initial phase of the infection and the backward infection process. We are ready to state the main proposition on which our proof of the epidemic curve is based. Let us denote the infection time from $v$ to $w$ by $L_n(v,w)$. (The first part of this proposition is part of Theorem 1.2 in \cite{BhaHofHoo12}, the second is a two-vertex analogue of it that can be proved in a similar way.) In its statement, and for $s>0$, we let $\cG_n(s)$
denote the $\sigma$-algebra of all vertices that are infected before time $s$, as well as all edge weights of the half-edges that are incident to such vertices. Thus, as opposed to $\cF_n(s)$ which has information only about the sequence of transmissions that have transpired before time $s$, $\cG_n(s)$ also contains information about the ``coming generation'' of infections.

\begin{Proposition}\label{prop:connection} 
%Let us denote the forward and backward infection processes by $\CF_n(s_n), \widetilde \CF_n(s_n)$, with 
Take $s_n$ as in Proposition \ref{xxx}.
The shortest infection path between two uniformly picked vertices $V_n$ and $\widetilde V_n$ satisfies
%\be\label{eq:shortest-path-lemma} L_n(V_n, \widetilde V_n) - \frac{\log n}{\la_n} \toindis -\frac{1}{\la} \left(  - \log W -\log \widetilde W  - \Lambda  + c \right).     \ee
	\be
	\label{eq:shortest-path-prob}	
	\Pv\left( L_n(V_n, \widetilde V_n) -\frac{\log n}{\la_n} 
	+ \frac{\log W_{s_n}}{\la_n} 
	+ \frac{\log \widetilde W_{s_n}}{\la_n}<t ~\Big|~\cG_n(s_n), \widetilde \cG_n(s_n) \right) 
	\toindis \Pv( -\La/\la +c < t ).
	\ee
Further, with $V_n$, $\widetilde V_n^{\sss(1)}$ and $\widetilde V_n^{\sss(2)}$ three independent uniform
vertices in $[n]$, and their forward and backward infection processes $\cG_n(s_n), \widetilde \cG_n^{\sss(1)}(s_n), \widetilde \cG_n^{\sss(2)}(s_n)$,
	\eqan{
	\label{eq:shortest-path-prob-two}	
	&\nn\Pv\Big( L_n(V_n, \widetilde V_n^{\sss(i)}) -\frac{\log n}{\la_n} 
	+ \frac{\log W_{s_n}}{\la_n} 
	+ \frac{\log \widetilde W_{s_n}^{\sss(i)}}{\la_n}<t,~i=1,2
	~\Big|~ \cG_n(s_n), \widetilde \cG_n^{\sss(1)}(s_n),
	\widetilde \cG_n^{\sss(2)}(s_n) \Big)\nn\\
	&\qquad \qquad
	\toindis \Pv( -\La/\la +c < t )^2.
	}
\end{Proposition}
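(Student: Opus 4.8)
The plan is to derive \eqref{eq:shortest-path-prob-two} from the one-vertex statement \eqref{eq:shortest-path-prob} by showing that, conditionally on $\cG_n(s_n)$, the two backward connection problems decouple asymptotically. There are three moving parts: (i) at time $s_n$ the three explored clusters are disjoint and each is coupled to an independent copy of $\BP$; (ii) continuing the exploration, the forward cluster (frozen at a suitable time $t_n^\star$) together with the two backward clusters generates two collision point processes which converge \emph{jointly} to two \emph{independent} Poisson processes; (iii) conditioning on the three $\sigma$-algebras freezes $W_{s_n}$ and the $\widetilde W_{s_n}^{\sss(i)}$, so the first points of these two processes become asymptotically independent Gumbels, producing the square.

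\textbf{Step 1 (decoupling at time $s_n$).} First I would run the forward exploration of Section~\ref{ss:labelling} from $V_n$ and the two backward explorations from $\widetilde V_n^{\sss(1)},\widetilde V_n^{\sss(2)}$ up to time $s_n$. By Proposition~\ref{xxx} each is coupled, with error $\op(1)$, to a copy of $\BP$ (respectively $\BP_n$); since $s_n$ may be taken to grow slowly enough that $\e^{\la_n s_n}=\op(n^{1/2})$, a first-moment/union bound shows that the three vertex sets $\cF_n(s_n),\widetilde\cF_n^{\sss(1)}(s_n),\widetilde\cF_n^{\sss(2)}(s_n)$ and the half-edges they consume are pairwise disjoint whp, so the three couplings can be run simultaneously and independently. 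Hence $(W_{s_n},\widetilde W_{s_n}^{\sss(1)},\widetilde W_{s_n}^{\sss(2)})\toindis(W,\widetilde W^{\sss(1)},\widetilde W^{\sss(2)})$ with the last two i.i.d.\ and independent of $W$.

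\textbf{Step 2 (two independent connection processes).} Continuing the explorations conditionally on $\cG_n(s_n),\widetilde\cG_n^{\sss(1)}(s_n),\widetilde\cG_n^{\sss(2)}(s_n)$, I would freeze the forward cluster at $t_n^\star$ and grow each backward cluster until it collides with the frozen forward coming generation $\CA_{t_n^\star}$. For a single backward cluster this is exactly the Bhamidi--van der Hofstad--Hooghiemstra scheme behind \eqref{eq:shortest-path-prob}: the collisions form, after centering, a Poisson process on $\bR$ with intensity near the ``extra time'' $u$ asymptotically $\propto W_{s_n}\widetilde W_{s_n}^{\sss(i)}\e^{\la_n u}/n$, the residual-time-to-birth correction on the connecting edge converges to $1-F_{\sss R}^{\sss{(n)}}$ from \eqref{eq:remainings}, and the first point gives
	\[
	L_n(V_n,\widetilde V_n^{\sss(i)})-\frac{\log n}{\la_n}+\frac{\log W_{s_n}}{\la_n}+\frac{\log\widetilde W_{s_n}^{\sss(i)}}{\la_n}\toindis -\La^{\sss(i)}/\la+c .
	\]
The genuinely new point is the \emph{joint} convergence of the two collision processes to \emph{independent} Poisson limits: the two backward explorations, up to their respective first-collision windows, each touch $\Op(n^{1/2})$ half-edges with expected overlap $O(1)$, so they remain asymptotically independent; and although both sample half-edges of $\CA_{t_n^\star}$, only $O(1)$ of these are used inside the relevant $O(1)$-length time windows against $|\CA_{t_n^\star}|=\Theta(n^{1/2})$ available, so sampling is effectively with replacement and the cross-intensity between the two point processes vanishes. (Alternatively, a slightly asymmetric split --- forward grown to $\tfrac{2}{3\la_n}\log n$, each backward to $\tfrac{1}{3\la_n}\log n$ --- makes the two backward clusters disjoint whp while preserving the centering.)

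\textbf{Step 3 (assembly) and the main obstacle.} Conditioning on the three $\sigma$-algebras freezes $W_{s_n}$ and both $\widetilde W_{s_n}^{\sss(i)}$, so the only remaining randomness in the left side of \eqref{eq:shortest-path-prob-two} is the pair of first collision points, which by Step~2 are asymptotically independent standard Gumbels $-\La^{\sss(i)}/\la$ shifted by $c$; this gives $\Pv(-\La/\la+c<t)^2$. One then removes the $\op(1)$ coupling errors and the null event of Step~1 and passes from $\BP_n$ to $\BP$ exactly as in the one-vertex case. The main obstacle is Step~2: making the joint Poisson approximation rigorous in the presence of the shared sampling pool $\CA_{t_n^\star}$ (and, under the symmetric split, the $O(1)$ overlap of the two backward clusters). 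This is a Chen--Stein-type estimate in which one must bound by $\op(1)$ the expected number of ``bad'' collision pairs --- those occurring near-simultaneously in the two processes, or through a common half-edge of $\CA_{t_n^\star}$, or through a vertex already used by the other backward cluster; all other error terms are a bookkeeping extension of the one-vertex analysis in \cite{BhaHofHoo12}.
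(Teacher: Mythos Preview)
Your proposal is sound and captures the right strategy. Note that the paper itself does not give a self-contained proof of this proposition: the parenthetical remark just before its statement says the first part is part of Theorem~1.2 of \cite{BhaHofHoo12} and the second ``is a two-vertex analogue of it that can be proved in a similar way.'' Sections~\ref{ss:BHHconnect} and~\ref{ss:BRconnect} then sketch two routes to \eqref{eq:shortest-path-prob}, and your outline is in fact closer to the second of these.

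One point of terminology: you describe freezing the forward cluster at $t_n^\star$ and growing the backward clusters into it, and call this ``the Bhamidi--van der Hofstad--Hooghiemstra scheme,'' but in the paper that is the Barbour--Reinert variant (Section~\ref{ss:BRconnect}). The BHH route in Section~\ref{ss:BHHconnect} runs the forward and backward clusters \emph{simultaneously} and tracks collision edges via the five-coordinate PPP of Theorem~\ref{thm-main-PPP}; the symmetric centering $\tn$ in \eqref{tn-def} absorbs both $W_{s_n}$ and $\widetilde W_{s_n}$ at once, and the Gumbel arises from $\min_i[2P_i+R_i]$ via \eqref{rewrite-Gumbel}. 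Your frozen-forward picture leads instead to the one-sided intensity displayed in Section~\ref{ss:BRconnect}, with the $W$-factor recovered afterwards through $\tau_{\sqrt n}$. Both routes give the same limit, and for the two-vertex extension your choice is arguably the more convenient one: with the forward cluster frozen, the two backward explorations are on an equal footing and the joint Poisson limit factorises for exactly the reasons you list (backward clusters disjoint up to $\Op(1)$ overlap, negligible depletion of the common pool $\CA_{t_n^\star}$). The paper offers no further detail on \eqref{eq:shortest-path-prob-two} beyond ``similar,'' so your Step~2 and the Chen--Stein estimate you flag as the main obstacle are precisely what one would have to write out to make the argument complete.
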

%\KJ{Is this what we need exactly? RvdH: No, we need two vertices.}

%\SB{Do we need to change this as well to $\cG_n$ instead of $\cF_n$? Remco: Yes!}
\subsection{The Bhamidi-van der Hofstad-Hooghiemstra connection process}
\label{ss:BHHconnect}
In this section, we describe the results on the connection process in \cite{BhaHofHoo12}.
We start by setting the stage. Fix the deterministic sequence $s_n\rightarrow \infty$ as in Proposition \ref{xxx}. Then, define
    	\eqn{
    	\label{tn-def}
    	t_n=\frac{1}{2\alphan} \log{n},
	\qquad
	\tn=\frac{1}{2\alphan} \log{n}
	-\frac{1}{2\alphan} \log{\big(W_{s_n}\widetilde{W}_{s_n}\big)}.
    	}
Note that $\e^{\alphan t_n}=\sqrt{n}$, so that at time $t_n$, both $\cF_n(t_n), \widetilde \cF_n(t_n)$
have size of order $\sqrt{n}$; consequently the variable $t_n$ denotes the typical time when collision edges start appearing. The time $\tn$ incorporates for stochastic fluctuations in the size of these infected  (and backward-infected) clusters. %smallest-weight graphs. 
%\KJ{I put infected clusters, since the reader might not be comfortable with smallest weight things...}

By Proposition \ref{xxx}, $s_n\rightarrow \infty$ is such that $\cF_n(s_n)$ and $\widetilde \cF_n(s_n) $ for $t\leq  s_n$ can be coupled with two independent CTBPs. For the present part, it is crucial that the forward CTBP from $V_n$ and the backward CTBP from $\widetilde V_n$ are run \emph{simultaneously}. That is, we run the two exploration processes described in Section \ref{ss:labelling} at the same time. 

We say that an edge is a {\em collision edge} when, upon pairing it, it connects to a half-edge in the 
other CTBP, i.e., either a half-edge in the coming generation of the forward cluster of $V_n$ pairs to a half-edge 
in the coming generation of the backward cluster of $\widetilde V_n$, or the other way around. The main result in 
this section describes the limiting stochastic process of the appearance of the collision edges, as 
well as their properties. In order to do so, we introduce some more notation.

Denote the $i$th collision edge by $(x_i,p_{x_i})$, where $p_{x_i}$ 
is an active half-edge (either in the forward or in the backward cluster) and $x_i$ the half-edge which pairs to $p_{x_i}$.
Further, let $\Tcol_i$ denote the time at which the $i$th collision edge is formed, which is the same as 
the birth time of the vertex incident to $x_i$. We let $R_{\Tcol_i}(p_{x_i})$ be the remaining life time of
the half-edge $p_{x_i}$, which, by construction is equal to the time after time $2\Tcol_i$ that the edge will
be found completely by the flow. Thus, the path that the edge $(x_i,p_{x_i})$ completes has length equal to
$2\Tcol_i+R_{\Tcol_i}(p_{x_i})$ and it has $H(x_i)+H(p_{x_i})+1$ edges, where $H(x_i)$ and $H(p_{x_i})$ 
denote the number of edges between the respective roots and the vertices incident to $x_i$ and $p_{x_i}$,
respectively. We conclude that 
the shortest weight path has weight equal to
$L_n(V_n\widetilde V_n)=\min_{i\geq 1} [2\Tcol_i+R_{\Tcol_i}(p_{x_i})]$. Let $J$ be the minimizer of this minimization problem. Then, the number of edges is equal 
to $H_n=H(x_J)+H(p_{x_J})+1$. Finally, for a collision edge $(x_i,p_{x_i})$, we let $I(x_i)=1$ when
$x_i$ is incident to a vertex that is part of $\CF_n(\Tcol_i)$ and $I(x_i)=2$ when
$x_i$ is incident to a vertex that is part of $\widetilde\CA_n(\Tcol_i)$.

In order to describe the properties of the shortest weight path, we define
    \eqn{
    \label{eq:resc-variables}
    \barTcol_i=\Tcol_i-\tn,
    \qquad
    \bar{H}_{i}^{\sss({\rm or})}=\frac{H(x_i)- t_n/m_n^{\star}}{\sqrt{(\sigma_n^{\star})^2t_n/(m_n^{\star})^3}},
    \qquad
    \bar{H}_{i}^{\sss({\rm de})}=\frac{H(p_{x_i})- t_n/m_n^{\star}}{\sqrt{(\sigma_n^{\star})^2t_n/(m_n^{\star})^3}},
    }
where $m_n^{\star}$ is the mean 
of the stable-age distribution in \eqref{eq:mainparameters}, while $\sigma_n^{\star}$ is its standard 
deviation.

We write the random variables $(\Xi_i)_{i\geq 1}$ with $\Xi_i\in \R\times \{1,2\}\times \R\times \R\times [0,\infty),$
by
    \eqn{
    \label{Ui-def}
    \Xi_i=\big(\barTcol_i, I(x_i), \bar{H}_{i}^{\sss({\rm or})}, \bar{H}_{i}^{\sss({\rm de})}, R_{\Tcol_i}(p_{x_i})
    \big).
    }
Then, for sets $A$ in the Borel $\sigma-$algebra of the space $\Scal:= \R\times \{1,2\}\times \R\times \R\times [0,\infty)$,
we define the point process
    \eqn{
    \label{PPP-discr}
    \Pi_n(A)=\sum_{i\geq 1} \delta_{\Xi_i}(A),
    }
where $\delta_x$ gives measure $1$ to the point $x$. Let $\Mcal(\Scal)$ denote the space of all simple locally finite point processes on $\Scal$ equipped with the vague topology (see e.g.\ \cite{KallenBK76}). 
 On this space one can naturally define the notion of  weak convergence of a sequence of random point processes $\Pi_n\in \Mcal(\Scal)$. This is the notion of convergence referred to in the following theorem. In the theorem, we let $\Phi$ denote the distribution function of a standard normal
random variable. Finally, we define the density $\fR$ of the limiting \emph{residual time to birth distribution} $F_R$ in \eqref{eq:remainings} given by
	\eqn{
	\label{dens_fR}
	\fR(x)=\frac{\int_0^\infty \e^{-\lambda y}g(x+y)\,dy}{\int_0^\infty \e^{-\lambda y} [1-G(y)]\,dy}.
	}
%\KJ{Remco, I gave a reference to  \eqref{eq:remainings}.}
Then, our main result about the appearance of collision edges is the following theorem:

\begin{Theorem}[PPP limit of collision edges]
\label{thm-main-PPP}
Consider the distribution of the point process $\Pi_n \in \Mcal(\Scal)$ defined in
\eqref{PPP-discr} conditional on $((\cF_n(s), \widetilde \cF_n(s)))_{s\in[0,s_n]}$ such that $W_{s_n}>0$ and $\widetilde W_{s_n}>0$. 
Then $\Pi_n$  converges in distribution as $n\to\infty$ to a Poisson Point Process (PPP) $\Pi$ 
with intensity measure
    \eqn{
    \label{intensity-PPP}
    \lambda(dt\times i\times dx\times dy\times dr)
    =\frac{2%\Ev[D^\star] 
	\Ev[D^\star] \fR(0)}{\expec[D]}\e^{2\lambda t}dt
    \otimes
    \{1/2,1/2\}
    \otimes
    \Phi(dx)
    \otimes
    \Phi(dy)
    \otimes
    \FR(dr).
    }
\end{Theorem}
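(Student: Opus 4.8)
\emph{Sketch of the proof.} The plan is to realize the collision edges as a rare thinning of the simultaneous forward/backward exploration and then invoke a Poisson limit theorem for null arrays, reading off the intensity from the branching-process asymptotics of Section~\ref{s:BP}. Throughout, condition on $\big((\cF_n(s),\widetilde\cF_n(s))\big)_{s\le s_n}$ on the event $\{W_{s_n}>0,\ \widetilde W_{s_n}>0\}$; by Proposition~\ref{xxx} the two clusters agree on $[0,s_n]$ with independent CTBPs. The first and technically heaviest step is to propagate the stable-age asymptotics past $s_n$ up to times $\tn+O(1)$: I would show that, in conditional probability and uniformly for $t=\tn+u$ with $u$ in a fixed compact interval, (i) $|\cA_n(t)|=W_{s_n}\e^{\la_n t}(1+\op(1))$ and $|\widetilde\cA_n(t)|=\widetilde W_{s_n}\e^{\la_n t}(1+\op(1))$ (from \eqref{eq:asalives}); (ii) the empirical law of the residual times to birth over the active half-edges of either cluster converges to $F_R$ of \eqref{eq:remainings}, whose density is continuous at $0$ with $\fR(0)=\la/(\Ev[D^\star]-1)$ by \eqref{dens_fR} and \eqref{eqn:mainparameters-lamb}; and (iii) the empirical law of $\big(H(\cdot)-t_n/m_n^\star\big)/\sqrt{(\sigma_n^\star)^2 t_n/(m_n^\star)^3}$ over the vertices incident to the active half-edges of either cluster converges to $\Phi$ — the central limit theorem for the depth of a typical individual in a supercritical CTBP. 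Because only $\op(n)$ half-edges have been paired by these times, the configuration model still looks ``fresh'', so (i)--(iii) follow from the Section~\ref{s:BP} limits applied at $s_n$ together with a stochastic-domination and second-moment stability argument on $[s_n,\tn+O(1)]$. This propagation is the main obstacle.

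Granting (i)--(iii), the intensity is a short computation. Near a generic time $t$ a collision arises in two symmetric ways. A firing (residual time hitting $0$) of a forward active half-edge pairs it to a uniformly chosen free half-edge, producing an $I=1$ collision exactly when the partner is one of the $\approx\Ev[D^\star_n]\,|\widetilde\cA_n(t)|$ not-yet-committed half-edges hanging off vertices already revealed by the backward cluster, out of $\approx\Ev[D_n]\,n$ free half-edges; symmetrically, a firing of a backward active edge reveals a vertex whose $\approx\Ev[D^\star_n]$ freshly paired half-edges each hit one of the $|\cA_n(t)|$ forward active half-edges with probability $\approx 1/(\Ev[D_n]n)$, producing an $I=2$ collision. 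By (ii) firings occur at rate $\fR(0)|\cA_n(t)|$, respectively $\fR(0)|\widetilde\cA_n(t)|$, so the total instantaneous collision rate is $\tfrac{2\Ev[D^\star_n]\fR(0)}{\Ev[D_n]\,n}\,|\cA_n(t)|\,|\widetilde\cA_n(t)|$, divided evenly between the two types. Inserting (i) and the identity $\e^{2\la_n\tn}=n/(W_{s_n}\widetilde W_{s_n})$ from \eqref{tn-def}, the factors $n$, $W_{s_n}$ and $\widetilde W_{s_n}$ cancel, and with $t=\tn+u$ the rate becomes $\tfrac{2\Ev[D^\star]\fR(0)}{\Ev[D]}\e^{2\la u}(1+\op(1))$. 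For the marks: by the symmetry of the two mechanisms and the equal growth rates, $I(x_i)$ is asymptotically uniform on $\{1,2\}$; the vertices incident to $x_i$ and to $p_{x_i}$ are typical individuals of their (independent) clusters, so by (iii) the rescaled depths $\bar H_i^{\sss({\rm or})}$ and $\bar H_i^{\sss({\rm de})}$ converge jointly to independent standard normals; and $R_{\Tcol_i}(p_{x_i})$ is the residual time to birth of a uniform active half-edge of the other cluster, hence has limit $F_R$ by \eqref{eq:remainings}. This reproduces the product intensity \eqref{intensity-PPP}.

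Finally I Poissonise. Fix a compact window $u\in[-K,K]$ in rescaled time. Conditionally on the clusters' histories up to $s_n$, the collision events there form a null array of rare marked points: each individual pairing is a collision with probability $\Op(\e^{2\la u}/n)\to 0$, the birth times are a.s.\ distinct, and — since only $\op(n)$ half-edges are ever touched — the sampling-without-replacement dependence among distinct pairings landing in the other cluster is negligible, as is the event that a collision edge or an Event~II cycle perturbs the dynamics before time $\tn+K$ (these occur $\Op(1)$ times and do not affect the leading-order rate). The Poisson limit theorem for superpositions of such rare marked points — equivalently, convergence of the Laplace functional $\Ev\big[\e^{-\langle\Pi_n,f\rangle}\big]\to\exp\!\big(-\!\int_\Scal(1-\e^{-f})\,d\lambda\big)$ for all $f\in C_c^+(\Scal)$, or a Chen--Stein bound on total variation — then gives $\Pi_n\convd\Pi$ with $\Pi$ a Poisson point process of intensity \eqref{intensity-PPP}; verifying the limit simultaneously over finitely many disjoint Borel subsets of $\Scal$ yields the independence of the limiting counts and completes the proof.
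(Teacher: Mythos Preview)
Your sketch follows essentially the same route as the paper: the paper's ``proof'' of this theorem is itself only the heuristic paragraph after the statement, deferring the technical work to \cite{BhaHofHoo12}. Both you and the paper identify the collision rate via the computation
\[
\frac{2\Ev[D^\star_n]\fR(0)}{\Ev[D_n]\,n}\,|\cA_n(t+\tn)|\,|\widetilde\cA_n(t+\tn)|
\approx \frac{2\Ev[D^\star]\fR(0)}{\Ev[D]}\e^{2\la t},
\]
obtain the generation marks from the CTBP depth CLT, the residual-time mark from \eqref{eq:remainings}, and the $\{1/2,1/2\}$ from symmetry. Your write-up is in fact more explicit than the paper's on two points the paper leaves implicit: the need to propagate the stable-age asymptotics from $[0,s_n]$ to $[s_n,\tn+O(1)]$, and the actual Poissonisation (Laplace functional / Chen--Stein) step.

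One small wrinkle: your description of the mechanism producing the $\Ev[D^\star]$ factor is asymmetric between $I=1$ and $I=2$ (you put it on the ``target'' side for one and the ``source'' side for the other). In the paper's construction both clusters are run in forward mode simultaneously, and the factor arises in the same way on each side: when an active half-edge fires and discovers a new vertex, that vertex's $\approx D_n^\star$ fresh half-edges are each checked for landing in the other cluster's active set. This does not affect the rate you compute, but it is worth getting the bookkeeping symmetric so that the claimed uniformity of $I$ and the independence of $\bar H_i^{\sss(\rm or)}$, $\bar H_i^{\sss(\rm de)}$, $R_{\Tcol_i}(p_{x_i})$ are transparently justified.
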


Write the points in the above PPP as $(P_i)_{i\geq 1}$. In \cite{BhaHofHoo12}, it is shown that
Theorem \ref{thm-main-PPP} implies that $L_n(V_n,\widetilde V_n)-2\tn\convd \min_{i\geq 1}[2P_i+R_i]$.
Further, it follows that
	\eqn{
    	\label{rewrite-Gumbel}
    	\min_{i\geq 1}(2P_i+R_i)
	\stackrel{d}{=}-\Lambda/\lambda-\log(%\expec[D^{\star}]
	\Ev[D^\star]\fR(0) B/\expec[D])/\lambda,
    	}
with $B=\int_{0}^{\infty} \FR(z) \e^{-\lambda z}\,dz=m^\star/\Ev[D^\star-1]$, where $m^\star$ is the mean of the so-called {\em stable-age distribution} in \eqref{eq:mainparameters}. %\KJ{Remco, it is only $\Ev[D^\star-1]$, not 2, since in the definition of $D^\star$ we already subtracted one. } 
In \cite[Lemma 2.3]{BhaHofHoo12}, it is shown that $\fR(0)=\lambda/\expec[D^{\star}-1]$, 
so that
$c=-\log(\Ev[D^\star]\fR(0)B/\expec[D])/\la=\log(\expec[D]\expec[D^\star-1]^2/(\lambda\expec[D^{\star}]m^\star))/\la$.

Here we thus see that the Gumbel distribution arises from the minimization of
the points of the PPP $(2P_i+R_i)_{i\geq 1}$. Interestingly, the Gumbel distribution {\em also} arises 
in $\min_{i\geq 1} P_i$, but with a different constant $c$. Thus, the addition of the residual life-time 
only changes the constant. Since $2(\tn-t_n)\convd \frac{1}{\la} \log{\big(W\widetilde{W}\big)}$,
this proves that 
	\eqn{
	\label{Ln-lim-pf}
	L_n(V_n,\widetilde V_n)-2t_n=\min_{i\geq 1} [2\Tcol_i+R_{\Tcol_i}(p_{x_i})]-2t_n
	\convd -\Lambda/\lambda+c +\frac{1}{\la} \log{\big(W\widetilde{W}\big)}.
	}
Also, by \refeq{resc-variables}, 
the trail of the epidemic, which is equal to $H_n=H(x_J)+H(p_{x_J})+1$, satisfies that
$(H_n-2t_n/m_n^{\star})/\sqrt{(\sigma_n^{\star})^2t_n/(m_n^{\star})^3}$ converges in distribution to
the sum of two i.i.d.\ standard normal random variables, where $m_n^{\star}$ is the mean 
of the stable-age distribution in \eqref{eq:mainparameters}, while $\sigma_n^{\star}$ is its standard 
deviation. This explains \eqref{Pn(t,h)-conv}, and identifies $\alpha_n=1/(\alphan m_n^{\star})$ and
$\beta=(\sigma^{\star})^2/[\lambda (m^{\star})^3]$, where $m^{\star}=\lim_{n\rightarrow \infty} m_n^{\star}$ and $\sigma^{\star}=\lim_{n\rightarrow \infty} \sigma_n^{\star}$.

To prove Theorem \ref{thm-main-PPP}, we investigate the expected number of collision edges that are created. The branching process theory in Section \ref{s:BP} suggests that when a collision edge occurs,
the generation of both vertices that are part of the collision edge satisfies a central limit theorem. Further, the residual time to birth of the active half-edge to which we have paired the newly found half-edge converges in distribution to the residual life-time distribution. Thus, we only need to argue that the stochastic process that describes the times of finding the collision edges and centered by  $\tn$ as in \eqref{eq:resc-variables}
converges to a PPP with intensity measure $t\mapsto \frac{2\Ev[D^\star] \fR(0)}{\expec[D]}\e^{2\lambda t}$.
For this, we note that the rate at which new half-edges are found at time $t+\tn$ is roughly equal to
$2\fR(0) |\cA_n(t+\tn)||\widetilde \cA_n(t+\tn)| /\CL_n$, where the factor $\fR(0)$ is due to the fact that
half-edges with remaining life-time equal to 0 are the ones to die, and the factor 2 due to the fact that $\cF_n$ as well as 
$\widetilde\cF_n$ can give rise of the birth of the half-edge.  %Denote by $|\cA_n(t)|$ the number of active half-edges at time $t$ in the forward process starting from $V_n$, and by $|\widetilde \cA_n(t)|$ the number of active half-edges at time $t$ in the backward process started from $\widetilde{V}_n$. 

Here we also note that $|\cA_n(t+\tn)|$ and 
$|\widetilde \cA_n(t+\tn)|$ are of order $\sqrt{n}$, and thus the total number of half-edges is 
equal to $\CL_n(1+\op(1))$.) When a half-edge dies, it has a random number of children with distribution close to $D_n^{\star}$, and each of the corresponding half-edges can create a collision edge, hence we add an extra $\Ev[D_n^\star]$ factor. Further, we can approximate $\CL_n\approx n\expec[D_n]$, $|\cA_n(t)|\approx \e^{\lambda_n t} W_{s_n}$ and 
$|\widetilde \cA_n(t)|\approx \e^{\lambda_n t} \widetilde W_{s_n}$, so that,
using \eqref{tn-def},
	\be 
	\frac{%\expec[D_n^{\star}] 
	\Ev[D_n^\star]\fR(0)|\cA_n(t+\tn)||\widetilde \cA_n(t+\tn)|}{\CL_n}
	\approx \frac{%\expec[D_n^{\star}]
	\Ev[D_n^\star]\fR(0)}{\expec[D]n} \e^{2\lambda_n (t+\tn)} W_{s_n}\widetilde W_{s_n}\\
	=\frac{\expec[D^{\star}_n] f_R(0)
	}{\expec[D_n]} \e^{2\lambda_n t}.
	\ee
This explains Theorem \ref{thm-main-PPP}.

\subsection{The Barbour-Reinert connection process - differences} 
\label{ss:BRconnect}
The main difference between the Barbour-Reinert proof of Proposition \ref{prop:connection} and the previous section is that in the proof in \cite{BarRei13}, the forward and the backward cluster are run after each other, not simultaneously:

We couple the infection process together with the exploration on $\CMD$ to the forward BP with small errors up to time $t^\star_n:=\tau_{\sqrt{n}}$ in the forward process ($\tau_{\sqrt{n}}$ denotes the time when the $\sqrt{n}$th vertex enters the infection), which we \emph{freeze} after this time. We then couple the backward process \emph{conditionally on the frozen cluster of forward process} up to time $\frac{1}{2\la_n} \log n + K$ time for some large $K>0$. Then, by \eqref{eq:asalives} we see that for any $u \in \R$, at time $t_n(u):=\frac{1}{2\la_n} \log n +u$ %we have  $\widetilde Y_{t_n(u)}=\tfrac{\la+1}{\la}\sqrt{n} \e^{\la u} \widetilde W_{t_n(u)}$ born individuals in the backward process, 
the size of the coming generation in the forward process is $|\CA_{\tau_{\sqrt{n}}}|=c_n^A \sqrt{n}(1+o(1))$ for a specific constant $c_n^A$
and the size of the backward cluster is
 $|\widetilde \CA_{t_n(u)}| =c_n^A \sqrt{n} \e^{\la u} \widetilde W_{t_n(u)}(1+o(1)).$ 
 From here the formation of collision edges leads to a similar two dimensional Poisson process to the one described as the first and last coordinate in \eqref{intensity-PPP}, i.e. here the intensity measure, conditioned on $\widetilde W_{s_n}$ is given by 
 	\[
	\frac{%\Ev[D^\star] 
	\Ev[D^\star] \fR(0)}{\expec[D]}\e^{\lambda x} 
	\widetilde W_{s_n} \mathrm dx \otimes F_R(\mathrm dy).
	\]
From here onwards, the two proofs are essentially the same: the factor $W$ from the forward process appears it the formula $\tau_{\sqrt{n}}\approx \frac1{2\la_n}\log n - \frac{1}{\la} \log W_{s_n}$. The minimisation problem \eqref{rewrite-Gumbel} is then solved by  calculating the probability that there are no PPP points in the infinite triangle $x+y\le t$, yielding the statement of Proposition \ref{prop:connection}.

\subsection{Proof of Theorem \ref{thm-epidemic-curve}}
\label{ss:proof_main_theorem}
In Sections \ref{ss:BHHconnect} and \ref{ss:BRconnect} we gave two possible ways to determine the length of the shorts infection path between two uniformly chosen vertices. 
Now we use Proposition \ref{prop:connection}
to explain how to get the epidemic curve in Theorem \ref{thm-epidemic-curve} and complete its proof.

The proof of Theorem \ref{thm-epidemic-curve} will be based on the following key proposition that we prove below. Let $s_n\rightarrow \infty$ as in Proposition \ref{xxx},  and denote $W_{s_n}=\e^{-s_n\lambda_n}|\CA_{s_n}|$, where, as before, $|\CA_{t}|$ is the size of the coming generation of infected individuals at time $t$.
%\RvdH{Removed $d_n$, it also no longer appears below (2.3)...}
%\KJ{This way, $\Ev(W_{s_n})\neq 1$! Can we possibly introduce $W_{s_n}$ by $e^{-\la_n s_n} \CI_n(s_n) := W_{s_n} \cdot \frac{\deg_{V_n}}{\Ev(D_n^\star) m^\star_n \la_n} $ and this way $W_{s_n}$ has expected value $1$. Also, can we say what is the value of $s_n$? It is to very informative this way...} 

\begin{Proposition}[The epidemic curve with an offset]
\label{prop-epi-offset}
Under Condition \eqref{cond:CMFinVar}, consider the epidemic spread with i.i.d.\ continuous infection times on the configuration model $\CMD$. For every $t>0$,
	\eqn{
		\label{Pn-offset-conv}
	\Pn\Big(t+\frac{\log n}{\la_n}-\frac{\log{W_{s_n}}}{\lambda_n}, \alpha_n\log{n}+x\sqrt{\beta\log{n}}\Big)
	\convp P(t)\Phi(x).
	}\end{Proposition}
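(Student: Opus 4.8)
The plan is to prove Proposition~\ref{prop-epi-offset} by a conditional first- and second-moment argument. Write $s_n(t)=t+\tfrac{\log n}{\la_n}-\tfrac{\log W_{s_n}}{\la_n}$ and $h_n(x)=\alpha_n\log n+x\sqrt{\beta\log n}$, and observe that $s_n(t)$ is $\cG_n(s_n)$-measurable. Since the claimed limit $P(t)\Phi(x)$ is deterministic, it suffices to show
\[
\E\big[\Pn(s_n(t),h_n(x))\,\big|\,\cG_n(s_n)\big]\convp P(t)\Phi(x)
\qquad\text{and}\qquad
\Var\big(\Pn(s_n(t),h_n(x))\,\big|\,\cG_n(s_n)\big)\convp 0.
\]
The proposition then follows from the conditional Chebyshev inequality and bounded convergence. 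Throughout I use that Proposition~\ref{prop:connection}, stated for each fixed threshold $t$, upgrades to convergence of the conditional distribution function that is uniform over $t$ on compacts (a P\'olya-type argument, legitimate because the Gumbel limit $t\mapsto\Pv(-\La/\la+c<t)$ is continuous and strictly increasing), which permits the substitution of random but tight thresholds.

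\emph{First moment.} With $\widetilde V_n$ a uniformly chosen vertex,
\[
\E\big[\Pn(s_n(t),h_n(x))\,\big|\,\cG_n(s_n)\big]=\Pv\big(L_n(V_n,\widetilde V_n)\le s_n(t),\ H_n(\widetilde V_n)\le h_n(x)\,\big|\,\cG_n(s_n)\big).
\]
I would condition additionally on the backward cluster $\widetilde\cG_n(s_n)$ of $\widetilde V_n$ and rewrite $\{L_n(V_n,\widetilde V_n)\le s_n(t)\}$ as $\{L_n(V_n,\widetilde V_n)-\tfrac{\log n}{\la_n}+\tfrac{\log W_{s_n}}{\la_n}+\tfrac{\log\widetilde W_{s_n}}{\la_n}\le t+\tfrac{\log\widetilde W_{s_n}}{\la_n}\}$, where $\widetilde W_{s_n}$ is $\widetilde\cG_n(s_n)$-measurable. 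Applying Proposition~\ref{prop:connection} at the random threshold $t+\la_n^{-1}\log\widetilde W_{s_n}$, and using the product form of the intensity in Theorem~\ref{thm-main-PPP} — which forces $H_n(\widetilde V_n)$ to be asymptotically independent of $L_n(V_n,\widetilde V_n)$ with $(H_n(\widetilde V_n)-\alpha_n\log n)/\sqrt{\beta\log n}\convd N(0,1)$ (centering and variance as computed below \eqref{Ln-lim-pf}) — one gets, conditionally on $(\cG_n(s_n),\widetilde\cG_n(s_n))$,
\[
\Pv\big(L_n(V_n,\widetilde V_n)\le s_n(t),\ H_n(\widetilde V_n)\le h_n(x)\,\big|\,\cG_n(s_n),\widetilde\cG_n(s_n)\big)=\Phi(x)\,F\big(t+\la_n^{-1}\log\widetilde W_{s_n}\big)+\op(1),
\]
with $F(u)=\Pv(-\La/\la+c<u)$. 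Since the backward cluster of a uniform vertex is asymptotically independent of $\cG_n(s_n)$ and $\widetilde W_{s_n}\convd\widetilde W=\e^{-\la\widetilde S}$, averaging over $\widetilde\cG_n(s_n)$ gives $\Phi(x)\,\E[F(t-\widetilde S)]=\Phi(x)\,\Pv(\widetilde S-\La/\la+c\le t)=P(t)\Phi(x)$, using the independence of $\La$ and $\widetilde S$ in the definition of $P$.

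\emph{Second moment.} Expanding the square,
\[
\E\big[\Pn(s_n(t),h_n(x))^2\,\big|\,\cG_n(s_n)\big]=\frac1{n^2}\sum_{w,w'\in[n]}\Pv\big(w,w'\text{ infected by }s_n(t),\ H_n(w),H_n(w')\le h_n(x)\,\big|\,\cG_n(s_n)\big).
\]
The $n$ diagonal terms contribute $O(1/n)\to0$. For the remaining pairs, $(w,w')$ has the law of two independent uniform vertices, which are distinct and whose backward clusters at time $s_n$ are disjoint with probability $1-o(1)$; conditioning on $\cG_n(s_n)$ and on these two backward clusters $\widetilde\cG_n^{\sss(1)}(s_n),\widetilde\cG_n^{\sss(2)}(s_n)$ and invoking the two-vertex statement \eqref{eq:shortest-path-prob-two} together with the trail central limit theorem and the uniformization above, the summand factorizes into $\Phi(x)^2\,F\big(t+\la_n^{-1}\log\widetilde W_{s_n}^{\sss(1)}\big)\,F\big(t+\la_n^{-1}\log\widetilde W_{s_n}^{\sss(2)}\big)+\op(1)$. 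Averaging over the (asymptotically independent) backward clusters gives $(P(t)\Phi(x))^2$, whence $\Var(\Pn(s_n(t),h_n(x))\,|\,\cG_n(s_n))\convp(P(t)\Phi(x))^2-(P(t)\Phi(x))^2=0$, completing the argument.

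The hard part will be making rigorous the two steps in which a random threshold is substituted into Proposition~\ref{prop:connection}: this needs the local-uniform upgrade of that proposition noted above, together with tightness of the thresholds $t+\la_n^{-1}\log\widetilde W_{s_n}$ — in particular a lower bound, i.e.\ $\widetilde W_{s_n}$ bounded away from $0$ — which holds on an event of probability $1-o(1)$, since $d_{V_n}\ge2$ gives $D^\star_n\ge1$, so the backward branching process never dies out and $\widetilde W_{s_n}\convd\widetilde W>0$ a.s.\ (this also renders the conditioning $\{W_{s_n}>0,\ \widetilde W_{s_n}>0\}$ of Theorem~\ref{thm-main-PPP} harmless). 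A secondary point is that the asymptotic factorization of $(L_n,H_n)$ and the asymptotic independence of the backward clusters — from each other and from $\cG_n(s_n)$ — must be extracted from the PPP description of Theorem~\ref{thm-main-PPP} and the two-vertex analogue in Proposition~\ref{prop:connection}; this is where the coupling-error control of Proposition~\ref{xxx} enters.
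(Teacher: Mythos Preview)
Your proposal is correct and follows essentially the same route as the paper: a conditional first- and second-moment argument given $\cG_n(s_n)$, further conditioning on the backward cluster(s), applying Proposition~\ref{prop:connection} (respectively \eqref{eq:shortest-path-prob-two}) at a random but $\widetilde\cG_n$-measurable threshold via the P\'olya-type uniform upgrade, and then averaging over $\widetilde W_{s_n}\convd\widetilde W$ to obtain $P(t)$. The only cosmetic difference is that the paper first treats $x=\infty$ and afterwards extends to finite $x$ via \cite[Theorem~2.2]{BhaHofHoo12}, whereas you carry the hopcount coordinate along from the start using the product structure of Theorem~\ref{thm-main-PPP}; both lead to the same conclusion.
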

\bigskip
%\KJ{Question: Why do we have $\log n/\la_n$ and $\log W_{s_n}/\la$? }
\noindent
{\it Proof of Theorem \ref{thm-epidemic-curve} subject to Proposition \ref{prop-epi-offset}.}
Fix $x\in {\mathbb R}$. Since $t\mapsto \Pn(t,v)$ is non-decreasing, and since the limit $t\mapsto P(t)$ in
\eqref{Pn-offset-conv} is non-decreasing, continuous and bounded, Proposition \ref{prop-epi-offset}
implies that the covergence in \eqref{Pn-offset-conv}  holds {\em uniform} in $t$, i.e., we have
	\eqn{
	\sup_{s\in {\mathbb R}} \Big|\Pn\Big(s+\frac{\log n}{\la_n}-\frac{\log W_{s_n}}{\la_n}, \alpha_n\log{n}+x\sqrt{\beta\log{n}}\Big)-P(s)\Phi(x)\Big|
	\convp 0.
	}
Applying this to $s=t+\frac{\log W_{s_n}}{\la_n}$, we thus obtain that
	\eqn{
	\Pn\Big(t+\frac{\log n}{\la_n}, \alpha_n\log{n}+x\sqrt{\beta\log{n}}\Big)=P(t+\frac{\log W_{s_n}}{\la_n})\Phi(x)+\op(1).
	}
Since $\frac{\log W_{s_n}}{\la_n}\convd \frac{\log{W}}{\lambda}=-S$ and $t\mapsto P(t)$ in continuous, this completes the proof of Theorem \ref{thm-epidemic-curve}.
\qed
\bigskip

\noindent
%\RvdH{Maybe write $S_n=-\frac{\log W_{s_n}}{\la}$ below? Simplifies notation significantly!}\KJ{OKeydok}
\begin{proof}[Proof of Proposition \ref{prop-epi-offset}.] We next complete the proof of Proposition \ref{prop-epi-offset} using Proposition \ref{prop:connection}. 
%\RvdH{We need to write $V_n, \widetilde V_n^{\sss(1)}$ and $\widetilde V_n^{\sss(2)}$ instead of $U_1,U_2,U_3$!}
%We start by setting the stage. \change{For $s>0$, let $\cG_n(s)$
%denote the $\sigma$-algebra of all vertices that are infected before time $s$, as well as all edge weights of the half-edges that are incident to such vertices. Thus, as opposed to $\cF_n(s)$ which has information only about the sequence of transmissions that have transpired before time $s$, $\cG_n(s)$ also contains information about the ``coming generation'' of infections.}  
We perform a second moment method on
$\Pn\Big(t+\frac{\log n}{\la_n}-\frac{\log W_{s_n}}{\la_n}, \alpha_n\log{n}+x\sqrt{\beta\log{n}}\Big)$, conditionally on $\cG_n(s_n)$. To simplify notation, we will take $x=\infty$, $S_n=-\frac{\log W_{s_n}}{\la_n}, \widetilde S_n=-\frac{\log \widetilde W_{s_n}}{\la_n}$ and show that
	\eqn{
	\label{sec-cond-mom}
	\expec\Big[\Pn\Big(t+\log n/\la_n+S_n\Big)\mid \cG_n(s_n)\Big]
	\convp P(t),
	\qquad
	\expec\Big[\Pn\Big(t+\log n/\la_n+S_n\Big)^2\mid \cG_n(s_n)\Big]
	\convp P(t)^2.
	}
	%\SB{Changed $\FF_s$ to $\cG_n(s_n)$. The $s_n$ was also missing. Please check!}
Equation \eqref{sec-cond-mom} implies that, conditionally on $\cG_n(s_n)$,
$\Pn\Big(t+\log n/\la_n+S_n\Big)\convp P(t)$, as required.
%\RvdH{$\FF$ now has two meaning, $\sigma$-algebra and cluster in $\FF_n(t)$...} 
%\SB{Replaced $\cF_s$ by $\cF_n(s)$. Please check this ok!}
We start by identifying the first conditional moment. For this, we note that
	\eqan{
	\expec\Big[\Pn\Big(t+\log n/\la_n+S_n\Big)\mid \cG_n(s_n)\Big]
	&=\frac{1}{n}\sum_{w\in[n]}
	\prob\Big(\Wn(V_n,w) \le  t +\log n/\la_n+S_n\mid \cG_n(s_n)\Big)\nn\\
	&=\prob\Big(\Wn(V_n,\widetilde V_n^{\sss(1)})-\log n/\la_n-S_n\leq t\mid \cG_n(s_n)\Big),
	}
where $\widetilde V_n^{\sss(1)}$ is a uniform vertex independent of $V_n$ and $\Wn(v,w)$ is the time that the infection starting from $v$ reaches $w$. Thus, in the infinite-contagious period case, $\Wn(v,w)$ is nothing but the first-passage time from $v$ to $w$. For $s>0$, let $\widetilde \cG_n^{\sss(1)}(s)$
denote the $\sigma$-algebra of all vertices that would infect $\widetilde V_n^{\sss(1)}$ within time $s$ if the infection started from them at time $0$, as well as all edge weights of the edges that are incident to such vertices. Thus, by the argument about the backward process in Section \ref{ss:forwardbackward}, these vertices are the same as the vertices that would be infected before time $s$ from an infection started from $\widetilde V_n^{\sss(1)}$ in the backward process.

Write $\widetilde W_{s_n}=\e^{-\lambda_n s_n}|\widetilde\CA_{s_n}|$, where $\widetilde \CA_{t}$ denotes half-edges that are in the coming generation of the backward infection process of $\widetilde V_n^{\sss(1)}$ at time $t$.  We now further condition on $\widetilde \cG_n^{\sss(1)}(s)$, and obtain
	\eqan{
	\expec\Big[\Pn\Big(t+\log n/\la_n+S_n\Big)\mid \cG_n(s_n)\Big]
	=\expec\Big[\prob\Big(\Wn(V_n,\widetilde V_n^{\sss(1)})-\log n/\la_n-S_n\leq t\mid \cG_n(s_n), \widetilde \cG_n^{\sss(1)}(s_n)\Big)\mid \cG_n(s_n)\Big].
	}
By Proposition \ref{prop:connection} 
%{\color{red} \cite[Theorem 2.2]{BhaHofHoo12}, Sections \ref{ss:BHHconnect} and \ref{ss:BRconnect}}
there exists a constant $c>0$ such that
	\eqn{
	\label{conv-double-cond-prob}
	\prob\Big(\Wn(V_n,\widetilde V_n^{\sss(1)})-\log n/\la_n
	-S_n-\widetilde S_n\leq t\mid \cG_n(s_n), \widetilde \cG_n^{\sss(1)}(s_n)\Big)
	\convp \prob(-\Lambda/\lambda + c\leq t).
	}
%In more detail, \eqref{conv-double-cond-prob} follows from \cite[(2.16), (2.17),(2.20)]{BhaHofHoo12}. %\KJ{Here we should refer to something in Section \ref{ss:BHHconnect}. In fact, equation \eqref{conv-double-cond-prob} should be the main result of both Section \ref{ss:BHHconnect} and \ref{ss:BRconnect}. I can put it in a Lemma there. }

Again, since $t\mapsto \prob(-\Lambda/\lambda + c\leq t)$ is increasing and continuous, the above convergence even holds uniformly in $t$, i.e.,
	\eqn{
	\sup_{t\in {\mathbb R}} 
	\Big|\prob\Big(\Wn(V_n,\widetilde V_n^{\sss(1)})-\log n/\la_n
	-S_n-\widetilde S_n\leq t
	\mid \cG_n(s_n), \widetilde \cG_n^{\sss(1)}(s_n) \Big)-\prob(-\Lambda/\lambda + c\leq t)\Big|\convp 0.
	}
As a result,  
	\eqan{
	\expec\Big[\Pn\Big(t+\log n/\la_n+S_n\Big)\mid \cG_n(s_n), \widetilde \cG_n^{\sss(1)}(s_n)\Big]
	&=\prob\Big(\Wn(V_n,\widetilde V_n^{\sss(1)})-\log n/\la_n
	-S_n\leq t
	\mid \cG_n(s_n), \widetilde \cG_n^{\sss(1)}(s_n)\Big)\\
	&=\prob(-\Lambda/\lambda + c\leq t-\widetilde S_n\mid \widetilde \cG_n^{\sss(1)}(s_n)) + \op(1),\nn
	}
and since $\widetilde W_{s_n}\convp \widetilde W$ and $t\mapsto \prob(-\Lambda/\lambda + c\leq t)$ is continuous and bounded, we obtain that
	\eqn{
	\expec\Big[\Pn\Big(t+\log n/\la_n+S_n\Big)\mid \cG_n(s_n)\Big]
	\convp \prob(-\Lambda/\lambda + c\leq t-\widetilde S)
	=P(t).
	}
By bounded convergence, this also implies that
	\eqn{
	\expec\Big[\Pn\Big(t+\log n/\la_n+S_n\Big)\mid \cG_n(s_n)\Big]
	\convp P(t),
	}
which completes the proof of the convergence of the first moment.

We use similar ideas to identify the second conditional moment, for which we start by writing
	\eqan{
	&\expec\Big[\Pn\Big(t+\log n/\la_n+S_n\Big)^2\mid \cG_n(s_n)\Big]\\
	&=\frac{1}{n}\sum_{i,j\in[n]}
	\prob\Big(\Wn(V_n,i)+\log n/\la_n+S_n\leq t,
	\Wn(V_n,j)+\log n/\la_n+S_n\leq t\mid \cG_n(s_n)\Big)\nn\\
	&=\prob\Big(\Wn(V_n,\widetilde V_n^{\sss(1)})+\log n/\la_n+S_n\leq t,
	\Wn(V_n,\widetilde V_n^{\sss(2)})+\log n/\la_n+S_n\leq t\mid \cG_n(s_n)\Big),\nn
	}
where $V_n, \widetilde V_n^{\sss(1)}, \widetilde V_n^{\sss(2)}$ are three i.i.d.\  uniform vertices in $[n]$. 
For $s>0$ and $j\in \{1,2\}$, let $\widetilde \cG_n^{\sss(j)}(s)$
denote the $\sigma$-algebra of all vertices that would infect $\widetilde V_n^{\sss(j)}$ within time $s$ if the infection started from them at time 0, as well as all edge weights of the edges that are incident to such vertices. Thus, these vertices are the same as the vertices that would be infected before time $s$ in the backward infection process started from $\widetilde V_n^{\sss(j)}$. %\change{As before, let $|\widetilde \cF_n^{\sss(j)}(s)|$ denote the size of only the set of vertices that would infect $V_n^{\sss(j)}$ were the infection to start from them.}  

Write $\widetilde W_{s_n}^{\sss(j)}= \e^{-\lambda_n s_n}|\widetilde\CA^{\sss(j)}_{s_n}|,\  \widetilde S_n^{\sss{(i)}}=-\log \widetilde W^{\sss{(i)}}/\la_n$. We now further condition on $\widetilde \cG_n^{\sss(1)}(s_n)$ and $\widetilde \cG_n^{\sss(2)}(s_n)$, and obtain 
	\eqan{
	&\expec\Big[\Pn\Big(t+\log n/\la_n-S_n\Big)\mid \cG_n(s_n)\Big]\\
	&=\expec\Big[\prob\Big(\Wn(V_n,\widetilde V_n^{\sss(1)})-\log n/\la_n-S_n\leq t,
	\Wn(V_n,\widetilde V_n^{\sss(2)})-\log n/\la_n-S_n\leq t \mid \cG_n(s_n), \widetilde \cG_n^{\sss(1)}(s_n),\widetilde \cG_n^{\sss(2)}(s_n)\Big)\mid \cG_n(s_n)\Big].\nn
	}
By \eqref{eq:shortest-path-prob-two} in Proposition \ref{prop:connection}, there exists a constant $c>0$ such that
	\eqan{
	\label{conv-triple-cond-prob}
	&\prob\Big(\Wn(V_n,\widetilde V_n^{\sss(i)})-\log n/\la_n-S_n-\widetilde S_n^{\sss{(i)}}\leq t,~ i=1,2\mid \cG_n(s_n), \widetilde \cG_n^{\sss(1)}(s_n),\widetilde \cG_n^{\sss(2)}(s_n)\Big)\nn\\
	&\qquad\convp \prob(-\Lambda/\lambda + c\leq t, -\Lambda'/\lambda + c\leq t)^2=\prob(-\Lambda/\lambda + c\leq t)^2,
	}
since $\Lambda, \Lambda'$ are two independent Gumbel variables.
%\RvdH{Here it would help if we would state \cite[Theorem 2.2]{BhaHofHoo12} for more than one vertex!}
Now the argument for the first moment can be repeated to yield
	\eqn{
	\expec\Big[\Pn\Big(t+\log n/\la_n+S_n\Big)^2\mid \cG_n(s_n)\Big]
	\convp P(t)^2,
	}
which completes the proof of the convergence of the second moment for $x=\infty$.

The extension to $x<\infty$ follows in an identical fashion, now using that by \cite[Theorem 2.2]{BhaHofHoo12},
	\eqan{
	\label{conv-double-cond-prob-Hn}
	&\prob\Big(\Wn(V_n,\widetilde V_n^{\sss(1)})-\log n/\la_n
	-S_n-\widetilde S_n\leq t, H_n(V_n,\widetilde V_n^{\sss(1)})\leq \alpha_n\log{n}+x\sqrt{\beta\log{n}}\mid \cG_n(s_n), \widetilde \cG_n^{\sss(1)}(s_n)\Big)\nn\\
	&\qquad\qquad\convp \prob(-\Lambda/\lambda + c\leq t)\Phi(x),
	}
as well as a three vertex extension involving $V_n,\widetilde V_n^{\sss(1)}$ and $\widetilde V_n^{\sss(2)}$.
We omit further details.
\end{proof}

\section*{Acknowledgements}
The work of RvdH and JK is supported in part by The Netherlands Organisation for
Scientific Research (NWO). SB has been partially supported by NSF-DMS grants 1105581 and 1310002.

\bibliographystyle{abbrv}
\bibliography{epidemic-curve}

\end{document}